\documentclass[12pt,letterpaper,twoside]{article}

\usepackage{amsmath,amssymb}
\usepackage{amsthm}
\usepackage{stmaryrd}
\usepackage{fancyhdr}
\usepackage{times}
\usepackage{mathrsfs} % Required for C and H
\usepackage{titlesec}
\usepackage[pdftex,dvips]{geometry}
%\usepackage{hyperref}

% Geometry...
\geometry{paper=letterpaper}
\geometry{twosideshift=0cm}
\geometry{headheight=16pt,headsep=17pt}
\geometry{textheight=22cm,textwidth=16.5cm}

% Hyphenation
\hyphenation{to-po-lo-gies}
\hyphenation{mo-no-mor-phism}
\hyphenation{met-riz-a-ble}
\hyphenation{u-sing}
\hyphenation{pseudo-char-ac-ter}
\hyphenation{homo-mor-phism}
\hyphenation{a-be-li-an}
\hyphenation{Ste-phen-son}
\hyphenation{Lem-ma}
\hyphenation{pseu-do-char-ac-ter}

%% Very important to make text look decent:
\frenchspacing

% Page and bibliography style

\fancyhf{}
\pagestyle{fancy}

\bibliographystyle{abbrv-lukacs}

\newtheoremstyle{fact}% name
     {\topsep}%      Space above
     {\topsep}%      Space below
     {\slshape}%         Body font
     {}%         Indent amount (empty = no indent, \parindent = para indent)
     {\bfseries}% Thm head font 
     {}%        Punctuation after thm head
     { }%     Space after thm head: " " = normal interword space;
           %       \newline = linebreak
     {\thmname{#1}\thmnumber{ #2.}\thmnote{ \rm (#3)}}
%     {{\thmname{#1}\thmnumber{ #2}\thmnote{ (#3)}}
%Thm head spec (can be left empty, meaning `normal')

%%%% New theorem types:
\newtheorem{theorem}{Theorem}[section]
\newtheorem{Ltheorem}{Theorem}

\newtheorem*{theorem*}{Theorem} 
\newtheorem{lemma}[theorem]{Lemma}
\newtheorem{proposition}[theorem]{Proposition}
\newtheorem{corollary}[theorem]{Corollary}
\newtheorem{problem}{Problem}

\theoremstyle{definition}
\newtheorem{definition}[theorem]{Definition}

\newtheorem*{remark*}{Remark}

\newtheorem*{question*}{Question}
\newtheorem*{examples*}{Examples}  
\newtheorem{example}[theorem]{Example}
\newtheorem*{example*}{Example}

\newtheorem*{convention*}{Convention}

\theoremstyle{fact}

\newtheorem{ftheorem}[theorem]{Theorem}

\newtheorem{fproposition}[theorem]{Proposition}

\newenvironment{myromanlist}[1][enumi]{\begin{list}{{\rm (\roman{#1})}}
{\usecounter{#1}\setlength{\labelwidth}{25pt}\setlength{\topsep}{-6pt}
\setlength{\itemsep}{-4pt} \setlength{\leftmargin}{25pt}}}{\end{list}}

\newenvironment{myalphlist}[1][enumi]{\begin{list}{{\rm (\alph{#1})}}
{\usecounter{#1}\setlength{\labelwidth}{25pt}\setlength{\topsep}{-6pt}
\setlength{\itemsep}{-4pt} \setlength{\leftmargin}{25pt}}}{\end{list}}

\def\proofont{\fontseries{bx}\fontshape{sc}\selectfont}
\def\proofname{Proof.}

\newcommand{\Note}[1]{}

\makeatletter
\renewenvironment{proof}[1][\proofname]{\par
  \normalfont
  \topsep6\p@\@plus6\p@ \trivlist
  \item[\hskip\labelsep\noindent\proofont #1]\ignorespaces
}{%
  \qed\endtrivlist
}
\makeatother

% \makeatletter
%  \def\@seccntformat#1{\csname the#1\endcsname.\quad}
% \makeatother
% 
% \sectionfont{\normalsize\bfseries\scshape\centering\MakeUppercase }
% \subsectionfont{\normalsize\bfseries\scshape }

\titlelabel{\thetitle.\ }
\titleformat*{\section}{\normalsize\bfseries\centering}
\titleformat*{\subsection}{\normalsize\bfseries}
\titlespacing{\subsection}{0pt}{\topsep}{0.5ex}
\titleformat{\subsection}[runin]{\normalfont\bfseries}{%
\thesubsection.}{0.5ex}{}[.]

\author{D. Dikranjan\thanks{The first author acknowledges 
the financial aid received from MCYT, MTM2006-02036 and FEDER funds.} 
{ }and G\'abor Luk\'acs\thanks{The second author gratefully acknowledges 
the generous financial  support received from NSERC and the University of 
Manitoba, which enabled him to do this research.}}
   
\title{Locally compact abelian groups admitting\\ non-trivial quasi-convex  
null sequences\thanks{{\em 2000 Mathematics Subject Classification}: 
Primary 22B05, 22C05; Secondary 22A05; 54H11.\endgraf \hspace{5.5pt} 
{\em Keywords:} quasi-convex sets, quasi-convex null sequences, exotic 
torus, $p$-adic integers, torus group.}}

\hyphenation{to-po-lo-gies}
\hyphenation{mo-no-mor-phism}
\hyphenation{me-tri-za-ble}

\begin{document}

\makeatletter
\def\@fnsymbol#1{\ifcase#1\or * \or 1 \or 2  \else\@ctrerr\fi\relax}

\let\mytitle\@title
\chead{\small\itshape D. Dikranjan and G. Luk\'acs / 
LCA groups admitting quasi-convex null sequences}
\fancyhead[RO,LE]{\small \thepage}
\makeatother

\maketitle

\def\thanks#1{} 

\thispagestyle{empty}

% References that we may or may not mention explicitly:

\begin{abstract}
\noindent
In this paper, we show that for every locally compact abelian group $G$, 
the following statements are equivalent:

\begin{myromanlist}

\item
$G$ contains no sequence $\{x_n\}_{n=0}^\infty$ such that
\mbox{$\{0\}\cup \{\pm x_n \mid n \in \mathbb{N} \}$}
is infinite and quasi-convex in $G$, and 
\mbox{$x_n \longrightarrow 0$};

\item
one of the subgroups $\{g \in G \mid 2g=0\}$ and 
$\{g \in G \mid 3g=0\}$ is open in $G$;

\item
$G$ contains an open  compact subgroup of the form $\mathbb{Z}_2^\kappa$ 
or $\mathbb{Z}_3^\kappa$ \ for some cardinal $\kappa$.

\end{myromanlist}
\end{abstract}

\section{Introduction}

\label{sect:intro}

One of the main sources of inspiration for the theory of topological 
groups is the theory of topological vector spaces, where the notion of 
convexity plays a prominent role. In this context, the reals $\mathbb{R}$ 
are replaced with the circle group $\mathbb{T}=\mathbb{R}/\mathbb{Z}$, and 
linear functionals are replaced by {\em characters}, that is, continuous 
homomorphisms to $\mathbb{T}$. By making substantial use of characters, 
Vilenkin introduced the notion of quasi-convexity for abelian topological 
groups as a counterpart of convexity in topological vector spaces (cf. 
\cite{Vilenkin}).  The counterpart of locally convex spaces are the 
locally quasi-convex groups.  This class includes all locally compact 
abelian groups and locally convex topological vector spaces 
(cf.~\cite{Banasz}).

According to the celebrated Mackey-Arens theorem 
(cf.~\cite{MackeyCTLS} and~\cite{ArensDLin}),
every locally convex topological vector space $(V,\tau)$ admits a 
so-called {\em Mackey topology}, that is, a locally convex vector 
space topology $\tau_\mu$ that is finest with respect to the 
property of having the same set of continuous linear functionals (i.e., 
\mbox{$(V,\tau_\mu)^*=(V,\tau)^*$}). Moreover, $\tau_\mu$ can be described 
as the 
topology of uniform convergence of the sets of an appropriate family of 
convex weakly compact sets of $(V,\tau)^*$. A counterpart of this notion 
in the class of locally quasi-convex abelian groups, the so-called {\em 
Mackey group topology},
was proposed in \cite{ChaMarTar}. It seems  reasonable to expect to  
describe the Mackey topology of a  locally quasi-convex abelian group $G$ 
as the topology $\tau_\mathfrak{S}$ of uniform convergence on members
of an appropriate family  $\mathfrak{S}$ of quasi-convex compact sets of 
the Pontryagin dual~$\widehat G$, where each set in $\mathfrak{S}$ is 
equipped with the weak topology. This underscores the importance 
of the compact quasi-convex sets in locally quasi-convex abelian groups. 
It was proven by Hern\'andez, and independently, by 
Bruguera and Mart\'{\i}n-Peinador, that a metrizable locally quasi-convex 
abelian group $G$ is complete if and only if the quasi-convex hull of 
every compact subset of $G$ is compact (cf.~\cite{Hern2} 
and~\cite{BrugMar2}). Luk\'acs extended this result, and proved that 
a~metrizable abelian group $A$ is MAP and has the quasi-convex compactness 
property if and only if it is locally quasi-convex and complete; he also 
showed that such groups are characterized by the property that the 
evaluation map $\alpha_A\colon A \rightarrow \hat{\hat A}$ is a closed 
embedding (cf.~\cite[I.34]{GLdualtheo}).

Let \mbox{$\pi\colon \mathbb{R} \rightarrow \mathbb{T}$}  denote  the 
canonical projection. Since the restriction 
\mbox{$\pi_{|[0,1)}\colon [0,1) \rightarrow \mathbb{T}$} is a~bijection, 
we often identify in the sequel, {\em par abus de language},  a~number 
\mbox{$a\in [0,1)$} with its image (coset) 
\mbox{$\pi(a)=a+\mathbb{Z}\in \mathbb{T}$}.  We put 
\mbox{$\mathbb{T}_m:=\pi([-\frac{1}{4m},\frac{1}{4m}])$} 
for all \mbox{$m\in \mathbb{N}\backslash\{0\}$}. According to standard 
notation in this area, we use $\mathbb{T}_+$ to denote $\mathbb{T}_1$.
For an abelian topological group $G$, we denote by $\widehat{G}$ 
the {\em Pontryagin dual} of a $G$, that is, the group of all 
characters of $G$ endowed with the compact-open topology.

\begin{definition}\label{def:into:qc}
For $E\subseteq G$ and $A \subseteq \widehat{G}$,  the 
{\em polars} of $E$ and $A$ are defined as
\begin{align}
E^\triangleright=\{\chi\in \widehat{G} \mid \chi(E) \subseteq 
\mathbb{T}_+\}
\quad \text{and} \quad
A^\triangleleft=\{ x \in A \mid  \forall \chi \in A, 
\chi(x) \in \mathbb{T}_+ \}.
\end{align}
The set $E$ is said to be {\em quasi-convex} if 
$E=E^{\triangleright\triangleleft}$. We say that $E$ is {\em $qc$-dense} 
if $G=E^{\triangleright\triangleleft}$.
\end{definition}

Obviously, \mbox{$E\subseteq E^{\triangleright\triangleleft}$} holds for 
every  \mbox{$E\subseteq G$}. Thus, $E$ is quasi-convex
if and only if for every \mbox{$x\in G\backslash E$} there exists 
\mbox{$\chi\in E^\triangleright$} such that 
\mbox{$\chi(x)\not\in \mathbb{T}_+$}. 
The set  \mbox{$Q_{G}(E):=E^{\triangleright\triangleleft}$} is the 
smallest quasi-convex set of $G$ that contains $E$, and it is 
called the {\em quasi-convex hull} of $E$. 

\begin{definition}
A sequence $\{x_n\}_{n=0}^\infty \subseteq G$ is said to be {\em  
quasi-convex} if  \mbox{$S=\{0\} \cup \{\pm x_n \mid n \in \mathbb{N}\}$} 
is quasi-convex in $G$. We say that $\{x_n\}_{n=0}^\infty$ is 
{\em non-trivial} if the set $S$ is infinite, and it is a {\em null 
sequence} if $x_n \longrightarrow 0$.
\end{definition}

\begin{example} \label{ex:intro:TJ23R}
Each of the compact groups $\mathbb{T}$, $\mathbb{J}_2$ ($2$-adic 
integers), and $\mathbb{J}_3$ ($3$-adic integers), and the locally compact 
group $\mathbb{R}$ admits a non-trivial quasi-convex null sequence
(cf.~\cite[1.2-1.4]{DikLeo} and~\cite[A-D]{DikGL1}).
\end{example}

In this paper, we characterize the locally compact abelian groups $G$ that 
admit a non-trivial quasi-convex null sequence.

\begin{samepage}
\begin{Ltheorem} \label{thm:qcs:main}
For every locally compact abelian group $G$, the following statements are 
equivalent:

\begin{myromanlist}

\item
$G$ admits no non-trivial quasi-convex null sequences;

\item
one of the subgroups $G[2]=\{g \in G \mid 2g=0\}$ and 
$G[3]=\{g \in G \mid 3g=0\}$ is open in $G$;

\item
$G$ contains an open compact subgroup of the form $\mathbb{Z}_2^\kappa$ or 
$\mathbb{Z}_3^\kappa$ \ for some cardinal $\kappa$.

{\parindent -25pt Furthermore, if $G$ is compact, then these conditions 
are also equivalent to:
}

\item
$G \cong \mathbb{Z}_2^\kappa \times F$ or 
$G\cong \mathbb{Z}_3^\kappa \times F$,
where $\kappa$ is some cardinal and $F$ is a finite abelian group;

\item
one of the subgroups $2G$ and $3G$ is finite.

\end{myromanlist}
\end{Ltheorem}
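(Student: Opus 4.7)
The plan is to prove the main equivalence (i)~$\Leftrightarrow$~(ii), along with the side equivalences (ii)~$\Leftrightarrow$~(iii) for all locally compact abelian groups and (ii)~$\Leftrightarrow$~(iv)~$\Leftrightarrow$~(v) in the compact case; the latter follow from Pontryagin duality and the LCA structure theorem. Explicitly, (iii)~$\Rightarrow$~(ii) is immediate since $\mathbb{Z}_p^\kappa \subseteq G[p]$ forces $G[p]$ to be open; for (ii)~$\Rightarrow$~(iii), an open subgroup of exponent $p$ is a locally compact topological $\mathbb{Z}_p$-vector space, which always contains an open compact $\mathbb{Z}_p$-subspace of the form $\mathbb{Z}_p^\kappa$. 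For compact $G$, (ii)~$\Leftrightarrow$~(v) uses $pG \cong G/G[p]$, and (v)~$\Rightarrow$~(iv) is handled dually: if $p\widehat{G}$ is finite then $\widehat{G}$ must be torsion, the primary decomposition reduces matters to the bounded $p$-primary part, which is a direct sum of cyclic groups by the Pr\"{u}fer structure theorem, and finiteness of $p\widehat{G}_p$ forces all but finitely many cyclic summands to have order exactly $p$; one obtains $\widehat{G} \cong \mathbb{Z}_p^{(\kappa)} \oplus F_0$ with $F_0$ finite, which dualizes to $G \cong \mathbb{Z}_p^\kappa \times F$.

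For (ii)~$\Rightarrow$~(i), the crucial observation is that in any Hausdorff abelian group $H$ of exponent $p \in \{2,3\}$, every character takes values in $\{0, 1/p, \dots, (p-1)/p\}$, of which only $0$ lies in $\mathbb{T}_+$; hence $E^{\triangleright} = E^{\perp}$ in $H$, and quasi-convex subsets of $H$ coincide with its subgroups. If $G[p]$ is open and $\{x_n\}$ is a non-trivial quasi-convex null sequence in $G$, then $x_n \in G[p]$ eventually, and by restricting characters of $G$ to $G[p]$ one checks that $S' := S \cap G[p]$ is quasi-convex in $G[p]$, hence an infinite subgroup. But $S'$ is also a null sequence, so every neighbourhood $U$ of $0$ in the induced topology on $S'$ is cofinite. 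Continuity of addition provides a symmetric neighbourhood $V$ of $0$ with $V + V \subseteq U$; and in the infinite abelian group $S'$ a cofinite set $V$ satisfies $V + V = S'$---for each $y \in S'$ the cofinite sets $V$ and $y - V$ must intersect. Thus $U = S'$ for every neighbourhood of $0$, contradicting Hausdorffness.

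For the converse (i)~$\Rightarrow$~(ii), assume neither $G[2]$ nor $G[3]$ is open and construct a non-trivial quasi-convex null sequence in $G$. By the LCA structure theorem, $G \cong \mathbb{R}^n \times H$ with $H$ containing a compact open subgroup $K$. If $n \geq 1$, then $\mathbb{R}$ is a direct summand of $G$, and the quasi-convex null sequence in $\mathbb{R}$ from Example~\ref{ex:intro:TJ23R} transports to $G$ along the retraction---continuity preserves convergence, and characters of $\mathbb{R}$ extend to $G$ so polar conditions are preserved. Otherwise $n = 0$, and the hypothesis descends to $K$: neither $K[2]$ nor $K[3]$ is open, equivalently $2K$ and $3K$ are both infinite. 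When $K$ admits a quotient isomorphic to $\mathbb{T}$, $\mathbb{J}_2$, or $\mathbb{J}_3$---which arises when $\widehat{K}$ contains $\mathbb{Z}$ or $\mathbb{Z}(p^\infty)$ for some $p \in \{2,3\}$---one lifts the corresponding quasi-convex null sequence from Example~\ref{ex:intro:TJ23R} to $K$. The lifted sequence then propagates from the open subgroup $K$ to $G$: any $x \in G \setminus K$ is separated from $0$ by a character of the discrete quotient $G/K$ whose lift vanishes on $K$ (hence on the sequence) while some integer multiple fails to lie in $\mathbb{T}_+$ at $x$.

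The main obstacle is twofold. First, even when $K$ has one of the desired quotients, the surjection $K \twoheadrightarrow \mathbb{T}$ (or $K \twoheadrightarrow \mathbb{J}_p$) generally admits no continuous section, so one must carefully select preimages $x_n \in K$ of the given quotient sequence such that $x_n \to 0$ in $K$ and $\{0\} \cup \{\pm x_n\}$ remains quasi-convex in $K$. Second, there exist compact groups satisfying the hypothesis---such as $(\mathbb{Z}/p^2\mathbb{Z})^\kappa$ with $\kappa$ infinite and $p \in \{2,3\}$---that admit no $\mathbb{T}$, $\mathbb{J}_2$, or $\mathbb{J}_3$ as a quotient, since their dual is a bounded $p$-primary group; in these residual cases a direct construction of the quasi-convex null sequence must be carried out (for $(\mathbb{Z}/p^2\mathbb{Z})^\kappa$ the standard coordinate generators do the job). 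The full proof plausibly proceeds via a detailed case analysis on the structure of $\widehat{K}$, combining the lifting strategy in the ``good quotient'' cases with direct constructions in the remaining ones.
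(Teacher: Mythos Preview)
Your treatment of (ii)$\Rightarrow$(i) and of the side equivalences (ii)$\Leftrightarrow$(iii) and (ii)$\Leftrightarrow$(iv)$\Leftrightarrow$(v) is essentially correct and matches the paper (Lemma~\ref{lemma:qcs:Z23}, Corollary~\ref{cor:qcs:Z23}, and the structure arguments in the proof of Theorem~\ref{thm:qcs:main}).

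The genuine gap is in (i)$\Rightarrow$(ii). You attack it by looking for \emph{quotients} of the compact open subgroup $K$ isomorphic to $\mathbb{T}$, $\mathbb{J}_2$, or $\mathbb{J}_3$, and then lifting quasi-convex null sequences back along the quotient map. As you yourself flag, this lifting is problematic: there is no continuous section, and there is no general mechanism by which a quasi-convex null sequence in a quotient pulls back to one in $K$. You also leave the residual bounded cases to an unspecified ``direct construction''. Neither obstacle is resolved, so the argument as written is not a proof.

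The paper avoids both difficulties by working with \emph{subgroups} instead of quotients. The elementary but decisive observation (Lemma~\ref{lemma:qcs:subgr}(a)) is that if a closed subgroup $H$ of an LCA group $G$ admits a non-trivial quasi-convex null sequence, then so does $G$: characters of $H$ extend to $G$, whence $Q_G(S)=Q_H(S)$ for $S\subseteq H$. Combined with Theorem~\ref{thm:Jp:main} (every $\mathbb{J}_p$, not only $p\in\{2,3\}$, carries such a sequence) and a direct construction in products $\prod_k \mathbb{Z}_{m_k}$ with $m_k\geq 4$ (Theorem~\ref{thm:qcs:cyclic}), one concludes that a group with no such sequence contains no closed copy of $\mathbb{R}$, $\mathbb{T}$, $\mathbb{J}_p$ (any $p$), $\mathbb{Z}_{p^2}^\omega$, or $\mathbb{Z}_p^\omega$ for $p>3$ (Corollary~\ref{cor:qcs:nosub}). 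The absence of all $\mathbb{J}_p$ makes the connected component of $K$ an exotic torus, and the classification in Theorem~\ref{thm:prel:ET} together with the remaining exclusions forces $K$ to be profinite with the required $\mathbb{Z}_2^\kappa\times F$ or $\mathbb{Z}_3^\kappa\times F$ shape (Proposition~\ref{prop:qcs:profin}). In short, reversing the direction from quotients to subgroups---and invoking $\mathbb{J}_p$ for all primes via Theorem~\ref{thm:Jp:main}---is precisely the missing idea.
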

\end{samepage}

\pagebreak[2]

Theorem~\ref{thm:qcs:main} answers a question of L. Aussenhofer in the 
case of compact abelian groups. The proof of Theorem~\ref{thm:qcs:main} is 
presented in \S\ref{sect:qcs}. One of its main ingredients is the next 
theorem, which (together with the results mentioned in 
Example~\ref{ex:intro:TJ23R}) implies that for \mbox{every} prime $p$,~the 
compact group $\mathbb{J}_p$ of $p$-adic integers admits a non-trivial 
quasi-convex null sequence.

\begin{Ltheorem} \label{thm:Jp:main}
Let $p \geq 5$ be a prime, 
$\underline a=\{a_n\}_{n=0}^\infty$ an increasing sequence of 
non-negative integers, and put 
\mbox{$y_n = p^{a_n}$}. The set
$L_{\underline a,p} = \{0\} \cup \{\pm y_n \mid n \in \mathbb{N}\}$
is quasi-convex in~$\mathbb{J}_p$.
\end{Ltheorem}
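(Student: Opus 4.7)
The strategy is to establish the non-trivial inclusion $L_{\underline a, p}^{\triangleright\triangleleft} \subseteq L_{\underline a, p}$: for every $x \in \mathbb{J}_p \setminus L_{\underline a, p}$ I will exhibit a character $\chi \in L_{\underline a, p}^{\triangleright}$ with $\chi(x) \notin \mathbb{T}_+$. Since $\widehat{\mathbb{J}_p}$ is the Pr\"ufer group $\mathbb{Z}(p^\infty)$, every non-trivial character has the form $\chi_{k/p^m}$ with $m \geq 1$ and $k \in \{1,\ldots,p^m-1\}$ coprime to $p$, acting by $\chi_{k/p^m}(x) = k(x \bmod p^m)/p^m \in \mathbb{T}$. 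The polar condition $\chi_{k/p^m} \in L_{\underline a, p}^{\triangleright}$ amounts to $k/p^{m-a_n} \in \mathbb{T}_+ \bmod 1$ for every $n$ with $a_n < m$. Writing $x = \sum_i c_i p^i$ and $v := v_p(x)$, I split into three cases.

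If $v \notin \{a_n\}$, I take $m = v+1$ and any $k \in \{1,\ldots,p-1\}$ with $kc_v \bmod p \in (p/4, 3p/4)$; such $k$ exists since multiplication by $c_v$ permutes $(\mathbb{Z}/p)^\ast$ and the target arc is non-empty for $p \geq 5$. Then $\chi_{k/p^{v+1}}(x) = kc_v/p \notin \mathbb{T}_+$, and every $a_n < m$ satisfies $a_n \leq v-1$, forcing $|k/p^{v+1-a_n}| \leq (p-1)/p^2 < 1/4$, so the polar condition is automatic. If instead $v = a_{n_0}$ but $c_v \notin \{1, p-1\}$, the same level $m = v+1$ works once $k$ is restricted to the corner $K := \{1,\ldots,\lfloor p/4\rfloor\} \cup \{\lceil 3p/4\rceil,\ldots,p-1\}$ so that $\chi(p^v) = k/p \in \mathbb{T}_+$; a finite case check shows $c_v \cdot K$ meets the middle arc $(p/4, 3p/4) \bmod p$ for every $c_v \in \{2,\ldots,p-2\}$ and every $p \geq 5$.

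The essential case is $v = a_{n_0}$, $c_v \in \{1, p-1\}$, and $x \neq \pm p^v$; replacing $x$ by $-x$ if needed, I assume $c_v = 1$, and set $w := \min\{i > v : c_i \neq 0\}$. If $w \notin \{a_n\}$ or $c_w \notin \{1, p-1\}$, the arguments above apply at level $m = w+1$, since the only new polar constraint (at $p^v$) contributes the harmless term $|k/p^{w+1-v}| \leq (p-1)/p^2 < 1/4$. The delicate sub-case is $w = a_{n_1}$ with $c_w \in \{1, p-1\}$: the separating character must send both $\chi(p^v)$ and $\chi(p^w)$ into $\mathbb{T}_+$, while their linear combination $\chi(x) = k/p^{w+1-v} + kc_w/p$ must escape $\mathbb{T}_+$, and this is the main obstacle because both summands are pulled toward zero. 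I propose the explicit choice $k := \lfloor p/4 \rfloor + p^{w-v}$ at level $m = w+1$ (and a mirrored choice, namely the largest $k \leq p^{w+1-v}/4$ with $k \bmod p = \lceil 3p/4\rceil$, when $c_w = p-1$). A direct calculation gives $\chi(p^{a_n}) \in [0, 1/4]$ for every $a_n \leq w$ (so every polar constraint is automatic, even if intermediate $a_n$'s accumulate in $(v, w)$), while $\chi(x) = (\lfloor p/4\rfloor + 1)/p + \lfloor p/4 \rfloor/p^{w+1-v}$, which strictly exceeds $1/4$ and stays below $3/4$ precisely because $p \geq 5$ makes $(\lfloor p/4\rfloor + 1)/p > 1/4$. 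Additional non-zero digits of $x$ beyond position $w$ contribute integer multiples of $1$ under $\chi_{k/p^{w+1}}$ and so vanish modulo $1$, reducing the general situation to this two-term analysis.
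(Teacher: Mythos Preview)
Your approach differs from the paper's: the paper first proves the ``full'' case $L_p := L_{\mathbb{N},p}$ is quasi-convex (using characters of the form $m\zeta_{k_1}\pm m\zeta_{k_2}$ and a special analysis for $p=7$), separately shows via Theorem~\ref{thm:Jp:Q12p} that every element of the bipolar has the shape $\sum\varepsilon_n y_n$ with $\varepsilon_n\in\{-1,0,1\}$, and then intersects. Your direct construction of a separating character at level $w+1$ is more hands-on and, once patched, would give a self-contained proof.

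The computation in your delicate sub-case with $c_w=1$ is correct, including the key point that for intermediate $a_n\in(v,w)$ one has $k\bmod p^{\,w+1-a_n}=\lfloor p/4\rfloor$, which is small. However, your ``mirrored choice'' for $c_w=p-1$ is wrong as stated. Take $p=11$, $a_0=v=0$, $a_1=1$, $a_2=w=2$, and $x=1-11^2$. Your recipe gives the largest $k\le 11^3/4$ with $k\equiv 9\pmod{11}$, namely $k=328$; but then $\chi(p^{a_1})=328/121\equiv -35/121\notin\mathbb{T}_+$, so $\chi\notin L_{\underline a,p}^{\triangleright}$. The genuine mirror of $\lfloor p/4\rfloor+p^{w-v}$ is $k=p^{w-v}-\lfloor p/4\rfloor$: this \emph{does} satisfy every polar constraint (again $k\bmod p^{\,w+1-a_n}\equiv-\lfloor p/4\rfloor$), and yields $\chi(x)=(\lfloor p/4\rfloor+1)/p-\lfloor p/4\rfloor/p^{\,w+1-v}$, which one checks lies strictly in $(1/4,1/2)$ for every $p\ge 5$.

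Two smaller points also need attention. In Case~2 the ``finite case check'' ranges over all primes $p\ge 5$, so you need an argument: for $c_v$ already in the middle arc take $k=1$, and for $2\le c_v\le\lfloor p/4\rfloor$ (possible only when $p\ge 11$) take the least $k$ with $kc_v>p/4$, then verify $k\le\lfloor p/4\rfloor$ and $kc_v\le p/2$. In sub-case~3a, your reduction to Cases~1/2 ignores that $\chi(x)=kc_w/p+k/p^{\,w+1-v}$ carries a correction term of size up to $(p-1)/p^2$; if $kc_w\bmod p$ sits near $3p/4$ this can push $\chi(x)$ past $3/4$, so you must arrange $kc_w\bmod p\in(p/4,\,p/2]$ (which the argument for Case~2 in fact delivers, or use $k\mapsto p-k$).
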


The proof of Theorem~\ref{thm:Jp:main} is presented in \S\ref{sect:Jp}. 
Theorem~\ref{thm:Jp:main} does not hold for \mbox{$p \not\geq 5$}
(see Example~\ref{ex:Jp:p23}). Dikranjan and de Leo gave sufficient 
conditions on the  sequence $\underline a$ to ensure that 
$L_{\underline a, 2}$ is 
quasi-convex in $\mathbb{J}_2$ (cf.~\cite[1.4]{DikLeo}). For the case 
where $p=3$, Dikranjan and Luk\'acs gave a complete characterization of 
those sequences $\underline a$ such that $L_{\underline a, 3}$ is 
quasi-convex (cf.~\cite[Theorem~D]{DikGL1}).

The arguments and techniques developed in the proof of Theorem B are 
applicable {\em mutandi mutandis} for sequences with $p$-power  denominators in $\mathbb{T}$. This is done in the next 
theorem, whose proof is given in \S\ref{sect:Tp}. Theorem~\ref{thm:Tp:main}
is a continuation of our study  of non-trivial  quasi-convex null 
sequences in $\mathbb{T}$ (cf.~\cite{DikGL1}).

\begin{Ltheorem} \label{thm:Tp:main}
Let $p \geq 5$ be a prime, 
$\underline a=\{a_n\}_{n=0}^\infty$ an increasing sequence of 
non-negative integers, and put 
\mbox{$x_n = p^{-(a_n+1)}$}. The set
$K_{\underline a,p} = \{0\} \cup \{\pm x_n \mid n \in \mathbb{N}\}$
is quasi-convex in~$\mathbb{T}$.
\end{Ltheorem}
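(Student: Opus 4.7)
The pairing of $\mathbb{T}$ with its dual $\widehat{\mathbb{T}}=\mathbb{Z}$ sends $m\in\mathbb{Z}$ to the character $t\mapsto mt$. Hence $K_{\underline a,p}^\triangleright$ consists of the integers $m$ such that $mp^{-(a_n+1)}\in\mathbb{T}_+$ for every $n$, or equivalently such that the residue $r_n:=m\bmod p^{a_n+1}$ satisfies $r_n\le p^{a_n+1}/4$ or $r_n\ge 3p^{a_n+1}/4$. To establish quasi-convexity I need to exhibit, for every $x\in\mathbb{T}\setminus K_{\underline a,p}$, an integer $m\in K_{\underline a,p}^\triangleright$ with $mx\notin\mathbb{T}_+$.

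The plan is to transport the proof of Theorem~\ref{thm:Jp:main} to $\mathbb{T}$ via a reversal of base-$p$ expansions. Writing $x\in\mathbb{T}$ in normalised form as $x=\sum_{i\ge 1}c_ip^{-i}$ with $c_i\in\{0,\dots,p-1\}$, the elements of $K_{\underline a,p}$ are exactly those whose digit sequence is either identically zero, a single $1$ at position $a_n+1$ with zeros elsewhere (giving $x_n$), or $p-1$ in positions $1,\dots,a_n+1$ followed by zeros (giving $-x_n$). Writing $m=\sum_{i\ge 0}d_ip^i$ in base $p$, the polar constraint says that for each $n$ the truncated sum $\sum_{i=0}^{a_n}d_ip^{i-(a_n+1)}$ lies in $[-\tfrac14,\tfrac14]\bmod 1$. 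After reversing the roles of positive and negative indices this is the same combinatorial set-up as for $\mathbb{J}_p$ in Theorem~\ref{thm:Jp:main}, so the witnessing characters constructed there transcribe, digit for digit, into integer witnesses here.

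Concretely I would proceed as follows. Every pure power $p^j$ with $j\ge 0$ automatically belongs to $K_{\underline a,p}^\triangleright$, since $p^j\cdot p^{-(a_n+1)}$ is either zero modulo $1$ or has absolute value at most $1/p\le 1/5$; moreover multiplication of $x$ by $p^{i-1}$ shifts the expansion so that $p^{i-1}x\bmod 1=0.c_ic_{i+1}\ldots$ in base~$p$. In the generic case in which $x$ has some digit $c_i$ with $p/4<c_i<3p/4$ — such digits existing precisely because $p\ge 5$ — the single character $m=p^{i-1}$ already pushes $mx$ into $(1/4,3/4)$, hence out of $\mathbb{T}_+$. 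The critical case is that in which every digit of $x$ lies in the extremal bands around $0$ and $p-1$ but the full pattern still fails to match any of the admissible templates for $K_{\underline a,p}$.

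I expect this last case to be the main obstacle, and to require the detailed digit-by-digit construction inherited from the proof of Theorem~\ref{thm:Jp:main}. The strategy is to locate the first position at which the digit sequence of $x$ deviates from every admissible pattern and then to prescribe the digits $d_0,d_1,\dots$ of $m$ inductively, ensuring at each stage both that the polar bound $r_n/p^{a_n+1}\in[0,1/4]\cup[3/4,1)$ is preserved for every relevant $n$ and that the accumulating contribution to $mx\bmod 1$ is steered outside $\mathbb{T}_+$. The hypothesis $p\ge 5$ provides the slack that makes this simultaneous choice feasible — several ``safe'' digit values $d_i$ preserve the polar bound while several ``disruptive'' ones push $mx$ past $\pm 1/4$ — and its failure for $p\in\{2,3\}$ is precisely what prevents an analogous statement in those cases.
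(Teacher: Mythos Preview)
Your proposal is a sketch, not a proof: the ``critical case'' --- where every digit of $x$ lies in the extremal bands --- is precisely the substance of the theorem, and you explicitly leave it undone. Announcing an intention to ``prescribe the digits $d_0,d_1,\dots$ of $m$ inductively'' is not the same as carrying out the construction and verifying that the polar constraints for \emph{all} $n$ are preserved simultaneously. Moreover, your description of what you would inherit from Theorem~\ref{thm:Jp:main} does not match its actual proof: that argument does not build a bespoke witnessing character for each $x$ by an inductive digit choice. It works the other way around, fixing small families of characters (the $m\zeta_k$ for $1\le m\le p-1$, and the combinations $\zeta_{k_1}\pm\zeta_{k_2}$) and using coefficient estimates (Theorem~\ref{thm:Jp:c1}, Corollaries~\ref{cor:Jp:c1}--\ref{cor:Jp:p-1c1}) to force the digits of any $x$ in the bipolar to be $0$ or $\pm 1$ in the right positions. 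Your ``reversal of base-$p$ expansions'' heuristic is morally right, but it does not hand you a ready-made inductive construction.

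Two further technical points. First, your unsigned digit convention $c_i\in\{0,\dots,p-1\}$ makes the template for $-x_n$ a string of $(p-1)$'s, which is awkward to separate from nearby non-members; the paper uses balanced digits $|c_i|\le(p-1)/2$, for which $\pm x_n$ are symmetric and the tail estimate~(\ref{eq:Jp:estimate}) is clean. Second, your ``generic case'' bound is slightly off: with unsigned digits, $p^{i-1}x$ lies in $[c_i/p,(c_i+1)/p]$, so you need $c_i>p/4$ \emph{and} $c_i+1<3p/4$; for $p=5$ and $c_i=3$ your stated range allows $p^{i-1}x$ to land in $[3/4,4/5]\subset\mathbb{T}_+$.

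The paper's route avoids a direct case analysis over all $x\notin K_{\underline a,p}$ altogether. It first proves the special case $K_p:=K_{\mathbb{N},p}$ is quasi-convex (Proposition~\ref{prop:Tp:Kp}), using characters $m(\eta_{k_1}\pm\eta_{k_2})$ with $|m|\le\lfloor p/4\rfloor$ and the coefficient lemmas from \S\ref{sect:Jp}. Since $K_{\underline a,p}\subseteq K_p$, this immediately gives $Q_\mathbb{T}(K_{\underline a,p})\subseteq K_p$. Independently, Theorem~\ref{thm:Tp:Q12p} shows $Q_\mathbb{T}(K_{\underline a,p})\subseteq\{\sum_n\varepsilon_n x_n:\varepsilon_n\in\{-1,0,1\}\}$. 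Intersecting the two containments yields $K_{\underline a,p}$ on the nose. This two-step reduction (to $K_p$, then to signed sums supported on $\underline a$) is the organising idea you are missing.
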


Theorem~\ref{thm:Tp:main} does not hold for \mbox{$p \not\geq 5$} (see 
Example~\ref{ex:Tp:p23}). Dikranjan and Luk\'acs gave complete 
characterizations of those sequences $\underline a$ such that 
$K_{\underline a, 2}$ and $K_{\underline a, 3}$ are quasi-convex in 
$\mathbb{T}$ (cf.~\cite[Theorem~A, Theorem~C]{DikGL1}).

\medskip

We are leaving open the following three problems. The first two are 
motivated by the observation that certain parts of 
Theorem~\ref{thm:qcs:main} 
hold for a class larger than that of (locally) compact abelian groups. 
Indeed, one can prove, for instance, that items (i), (ii), and (v) of 
Theorem~\ref{thm:qcs:main} remain equivalent for so-called  
$\omega$-bounded abelian groups. (A topological group $G$ is called 
{\em $\omega$-bounded} if every countable subset of $G$ is contained in
some compact subgroup of $G$.)

\begin{problem}
Is it possible to replace the class of locally compact abelian groups  in
Theorem~\ref{thm:qcs:main} with a~different class of abelian topological
groups that contains all compact abelian groups?
\end{problem}

\begin{problem}
Is it possible to obtain a characterization for countably compact abelian 
groups that admit no non-trivial quasi-convex null sequences in terms of 
their structures, in the spirit of Theorem~\ref{thm:qcs:main}?
\end{problem}

%\begin{problem}
%Is it possible to replace the class of locally compact abelian groups  in 
%Theorem~\ref{thm:qcs:main} with a~different class of abelian topological 
%groups (a larger one, or one that  contains all compact abelian groups)?
%\end{problem}

\begin{problem}
Let $H$ be an infinite cyclic subgroup of $\mathbb{T}$. Does $H$
admit a non-trivial quasi-convex  null sequence?
\end{problem}

\section{Preliminaries: Exotic tori and abelian pro-finite groups}

\label{sect:prel}

In this section, we provide a few well-known definitions and results that 
we rely on later. We chose to isolate these in order to improve the 
flow of arguments in \S\ref{sect:qcs}.

\begin{definition} (\cite{DikProET}, \cite[p.~141]{DikProSto})
A compact abelian group is an {\em exotic torus} if it
contains no subgroup that is topologically isomorphic to 
$\mathbb{J}_p$ for some prime $p$.
\end{definition}

The notion of exotic torus was introduced by Dikranjan and Prodanov in 
\cite{DikProET}, who also provided, among other things, the following 
characterization for such groups.

\pagebreak[2]

\begin{ftheorem}[{\cite{DikProET}}] \label{thm:prel:ET}
A compact abelian group $K$ is an 
exotic torus if and only if it contains a~closed subgroup $B$ such that

\begin{myromanlist}

\item
$K/B \cong \mathbb{T}^n$ for some $n \in \mathbb{N}$, and

\item
$B = \prod\limits_{p} B_p$, where each $B_p$ is a compact bounded abelian 
$p$-group.

\vspace{6pt}

\end{myromanlist}
Furthermore, if $K$ is connected, then each $B_p$ is finite.
\end{ftheorem}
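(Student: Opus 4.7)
My approach is through Pontryagin duality. Setting $A=\widehat K$, the hypothesis that $K$ is an exotic torus becomes (via $\widehat{\mathbb{J}_p}\cong\mathbb{Z}(p^\infty)$ and the correspondence between closed subgroups of a compact group and discrete quotients of its dual) the algebraic statement that $A$ admits no quotient isomorphic to $\mathbb{Z}(p^\infty)$ for any prime $p$. Likewise, the desired conclusion dualizes to the existence of a free subgroup $\mathbb{Z}^n\subseteq A$ of finite rank such that $A/\mathbb{Z}^n\cong\bigoplus_p T_p$, with each $T_p$ a bounded abelian $p$-group. Thus the theorem reduces entirely to a structural claim about such an $A$.

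For sufficiency, I would argue by contradiction: given $B$ as described, an embedding $\mathbb{J}_p\hookrightarrow K$ composed with $K\to K/B\cong\mathbb{T}^n$ lands in a totally disconnected closed subgroup of $\mathbb{T}^n$. But every closed subgroup of $\mathbb{T}^n$ has the form $\mathbb{T}^k\times F$ with $F$ finite, so a totally disconnected one is finite; hence $\mathbb{J}_p\cap B$ has finite index in $\mathbb{J}_p$, equals $p^m\mathbb{J}_p$ for some $m$, and is itself topologically isomorphic to $\mathbb{J}_p$. Projecting into the factors of $B=\prod_q B_q$, the component map $\mathbb{J}_p\to B_q$ vanishes for $q\neq p$ (a pro-$p$ group has no non-trivial continuous image in a $q$-group), so $\mathbb{J}_p$ embeds into $B_p$, contradicting the unboundedness of $\mathbb{J}_p$ against the boundedness of $B_p$.

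For necessity, I would proceed in three steps. First, $A$ has finite rank: otherwise $A$ contains a free subgroup $\mathbb{Z}^{(\mathbb{N})}$ that admits a surjection onto $\mathbb{Z}(p^\infty)$, and by divisibility (equivalently, injectivity) of $\mathbb{Z}(p^\infty)$ this surjection extends to $A\twoheadrightarrow\mathbb{Z}(p^\infty)$, contradicting our hypothesis. Let $n$ denote the rank, fix a full-rank free subgroup $\mathbb{Z}^n\subseteq A$, and write the (now torsion) quotient as its primary decomposition $A/\mathbb{Z}^n=\bigoplus_p T_p$. Second, each $T_p$ must be bounded: an unbounded abelian $p$-group always admits $\mathbb{Z}(p^\infty)$ as a quotient---directly via the divisible part, and in the reduced case by applying Pr\"ufer's basic subgroup theorem to produce an unbounded direct sum of cyclic $p$-groups, which maps naturally onto $\mathbb{Z}(p^\infty)=\varinjlim\mathbb{Z}/p^n\mathbb{Z}$, with the map lifting back by injectivity. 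Such a $\mathbb{Z}(p^\infty)$ quotient of $T_p$, composed with the projection $A/\mathbb{Z}^n\twoheadrightarrow T_p$ and then with $A\twoheadrightarrow A/\mathbb{Z}^n$, produces a $\mathbb{Z}(p^\infty)$ quotient of $A$, a contradiction. Third, dualizing the sequence $0\to\mathbb{Z}^n\to A\to\bigoplus_p T_p\to 0$ yields $K\supseteq B=\prod_p\widehat{T_p}$ (each factor a compact bounded $p$-group) with $K/B\cong\mathbb{T}^n$.

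Finally, for the \emph{furthermore} clause, if $K$ is connected then $A$ is torsion-free of finite rank $n$, so $A\subseteq A\otimes\mathbb{Q}\cong\mathbb{Q}^n$; for the chosen $\mathbb{Z}^n$, each $T_p$ sits inside $(\mathbb{Q}^n/\mathbb{Z}^n)[p^\infty]\cong\mathbb{Z}(p^\infty)^n$, and since $T_p$ is bounded---say by $p^N$---it lies in $((\tfrac{1}{p^N}\mathbb{Z})/\mathbb{Z})^n\cong(\mathbb{Z}/p^N\mathbb{Z})^n$ and is therefore finite, forcing each $B_p=\widehat{T_p}$ to be finite as well. The most delicate step is the second one: converting the algebraic hypothesis ``$T_p$ unbounded'' into an honest $\mathbb{Z}(p^\infty)$ quotient of $T_p$, which rests on basic subgroup theory for abelian $p$-groups combined with the repeated use of divisibility of $\mathbb{Z}(p^\infty)$ to lift homomorphisms from subgroups to the ambient group.
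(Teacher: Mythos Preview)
Your proof is correct and follows essentially the same strategy as the paper: pass to the Pontryagin dual $A=\widehat K$, translate ``exotic torus'' into the condition that $A$ has no quotient $\mathbb{Z}(p^\infty)$, and establish the structural characterization via finite torsion-free rank (using injectivity of $\mathbb{Z}(p^\infty)$ to extend from a free subgroup), primary decomposition of $A/\mathbb{Z}^n$, and basic-subgroup theory to force each $T_p$ to be bounded; the connected case is handled identically by embedding the torsion-free $A$ into $\mathbb{Q}^n$ and reading off that each $T_p$ is a bounded subgroup of $\mathbb{Z}(p^\infty)^n$, hence finite.

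The one place you diverge is the sufficiency direction. The paper stays on the discrete side: it assumes a surjection $f\colon X\twoheadrightarrow\mathbb{Z}(p^\infty)$, kills the finitely generated image $f(F)$ to reduce to a surjection from $X/F$, and then observes that the restriction to $T_p$ must already be surjective, contradicting boundedness. You instead argue directly on the compact side, pushing a hypothetical copy of $\mathbb{J}_p$ through $K\to K/B\cong\mathbb{T}^n$ to see that $\mathbb{J}_p\cap B$ has finite index, and then projecting into the factor $B_p$. Both arguments are short and valid; yours avoids a second appeal to injectivity of $\mathbb{Z}(p^\infty)$ and is perhaps more vivid, while the paper's keeps the entire proof uniformly in the discrete category it has already set up.
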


\pagebreak[2]

Since \cite{DikProET} is not easily accessible for most readers, we 
provide a sketch of the proof of Theorem~\ref{thm:prel:ET} for the sake of 
completeness. The main idea of the proof is to pass to the Pontryagin 
dual, and then to establish that an abelian group is the  dual of an 
exotic  torus if and only of it satisfies conditions that are dual to (i) 
and (ii). To that end, we recall that an abelian group $X$ is said to 
be {\em strongly non-divisible} if there is no surjective homomorphism
\mbox{$X \rightarrow \mathbb{Z}(p^\infty)$} for any prime $p$ 
(cf.~\cite{DikProET}).

\begin{proof}
It follows from Pontryagin duality that a compact abelian group $K$ 
contains no subgroup that is topologically isomorphic to $\mathbb{J}_p$ 
(i.e., $K$ is an exotic torus) if and only if its Pontryagin dual 
\mbox{$X=\widehat K$} has no quotient that is isomorphic to 
\mbox{$\mathbb{Z}(p^\infty)\cong \widehat{\mathbb{J}}_p$}
(cf.~\cite[Theorem~54]{Pontr}), that is, $X$ is strongly non-divisible.
Similarly, by 
Pontryagin duality (cf.~\cite[Theorem~37 and~54]{Pontr}), a compact 
abelian group $K$ satisfies  (i) and (ii) if and only if its Pontryagin 
dual \mbox{$X=\widehat K$} contains a subgroup $F$ such that
\begin{myromanlist}

\item[(i$^\prime$)]
$F \cong \mathbb{Z}^n$  for some $n \in \mathbb{N}$, and

\item[(ii$^\prime$)]
$X/F \cong \bigoplus\limits_p T_p$, where each $T_p$ is a bounded abelian 
$p$-group.

\vspace{6pt}

\end{myromanlist}
Hence, we proceed by showing that a discrete abelian group $X$ is strongly 
non-divisible if and only if $X$ contains a subgroup $F$ that satisfies 
(i$^\prime$) and (ii$^\prime$).

\pagebreak[3]

Suppose that $X$ is strongly non-divisible. If $X$ contains a free abelian 
group $F_0$ of infinite rank, then there is a surjective homomorphism
\mbox{$f\colon F_0 \rightarrow \mathbb{Z}(2^\infty)$}, and
$f$ extends to a surjective homomorphism
\mbox{$\bar f \colon X \rightarrow \mathbb{Z}(2^\infty)$}, because
$\mathbb{Z}(2^\infty)$ is divisible. Since $X$ is strongly non-divisible, 
this is impossible. Thus, $X$ contains a free abelian subgroup $F$ of 
finite rank $n$ such that $X/F$ is a~torsion group (i.e., $r_0(X)=n$).
Clearly, $F\cong \mathbb{Z}^n$, and so (i$^\prime$) holds. Let 
\mbox{$X/F = \bigoplus\limits_p T_p$} be  the primary decomposition of $X/F$
(cf.~\cite[8.4]{Fuchs}), fix a prime $p$, and let $A_p$ be a subgroup of 
$T_p$ such that $A_p$ is a direct sum of cyclic subgroups, and $T_p/A_p$ 
is divisible (e.g., take a $p$-basic subgroup of~$T_p$; 
cf.~\cite[32.3]{Fuchs}). One has
$T_p/A_p \cong \bigoplus\limits_{\kappa_p} \mathbb{Z}(p^\infty)$ 
(cf.~\cite[23.1]{Fuchs}). Since $T_p/A_p$ is a~homomorphic image of the 
strongly non-divisible group $X$, this implies that $\kappa_p=0$, and
so $T_p=A_p$. Therefore, $T_p$ is a direct sum of cyclic groups. Finally, 
if $T_p$ is not bounded, then it admits a surjective homomorphism
\mbox{$T_p \rightarrow \mathbb{Z}(p^\infty)$}. This, however, is 
impossible, because $T_p$ is a homomorphic image of $X$. Hence,
each $T_p$ is a bounded $p$-group, and (ii$^\prime$) holds.

Conversely, suppose that $X$ contains a subgroup $F$ that satisfies
(i$^\prime$) and (ii$^\prime$). Assume that $X$ is not strongly 
non-divisible, that is, there is a prime $p$ such that there exists a 
surjective homomorphism \mbox{$f\colon X \rightarrow \mathbb{Z}(p^\infty)$}.
Since $\mathbb{Z}(p^\infty)$ is not finitely generated,  the finitely 
generated image  $f(F)$ is a proper subgroup of $\mathbb{Z}(p^\infty)$,
and so \mbox{$\mathbb{Z}(p^\infty)/f(F)\cong \mathbb{Z}(p^\infty)$}.
Thus, by replacing $f$ with its composition with the canonical 
projection 
\mbox{$\mathbb{Z}(p^\infty)\rightarrow \mathbb{Z}(p^\infty)/f(F)$}, we may 
assume that \mbox{$F \hspace{-2pt}\subseteq\hspace{-1pt} \ker f$}. 
Therefore, $f$ induces  a surjective homomorphism 
\mbox{$g\colon X/F \rightarrow \mathbb{Z}(p^\infty)$}. Since 
$g(\bigoplus\limits_{q\neq p} T_q) = \{0\}$, surjectivity of $g$ implies 
that $g_{|T_p}$ is surjective. This, however, is a contradiction,
because by (ii$^\prime$),~$T_p$~is bounded. Hence, $X$ is strongly 
non-divisible.

It remains to be seen that if $K$ is a connected exotic tori, then each 
$B_p$ in (ii) is finite. In terms of the Pontryagin dual 
\mbox{$X=\widehat K$}, this is 
equivalent to the statement that if $X$ satisfies (i$^\prime$) and  
(ii$^\prime$), and $X$ is torsion free, then each $T_p$ 
is finite (cf.~\cite[Theorem~46]{Pontr}). We proceed by showing this 
latter implication. By (ii$^\prime)$, $X$ contains a~subgroup $F$ such 
that  \mbox{$F\cong \mathbb{Z}^n$} for some 
\mbox{$n \in \mathbb{N}$}. We have already seen that there is a~maximal 
$n$ with respect to this property. Since $X$ is torsion free, maximality 
of $n$ 
implies that $F$ meets non-trivially every non-zero subgroup of $X$ (i.e., 
$F$ is an {\em essential} subgroup). Let 
\mbox{$i\colon F \rightarrow \mathbb{Q}^n$} be a~monomorphism such that
\mbox{$i(F)=\mathbb{Z}^n$}. Since $\mathbb{Q}^n$ is divisible, $i$ 
can be extended
to a~homomorphism \mbox{$j\colon X \rightarrow \mathbb{Q}^n$}, and $j$ is 
a~monomorphism, because 
\mbox{$F \hspace{-2pt} \cap\hspace{-1.5pt} \ker j = \ker i$} is trivial 
(and $F$ is an essential subgroup).
Therefore, $j$ induces a~monomorphism  
\mbox{$\bar j \colon X/F\rightarrow \mathbb{Q}^n/\mathbb{Z}^n$}.
Since  \mbox{$\mathbb{Q}^n/\mathbb{Z}^n\cong 
\bigoplus\limits_p \mathbb{Z}(p^\infty)^n$}, the image
$\bar j(T_p)$ is isomorphic to a~bounded subgroup of 
$\mathbb{Z}(p^\infty)^n$ for every prime $p$. Hence, each $T_p$ is finite, 
because all bounded subgroups of $\mathbb{Z}(p^\infty)^n$ are finite.
This completes the proof.
\end{proof}

Recall that a topological group is {\em pro-finite} if it is the 
(projective) limit of finite groups, or equivalently, if it is compact and 
zero-dimensional. For a prime $p$, a topological group $G$ is called a 
{\em pro-$p$-group} if it is the (projective) limit of finite $p$-groups, 
or equivalently, if it is pro-finite and $x^{p^n} \longrightarrow e$ 
for every $x \in G$ (or,  in the abelian case, $p^n x \longrightarrow 0$). 

\begin{ftheorem}[{\cite{ArmacostLCA}, 
\cite[Corollary 8.8(ii)]{HofMor}, \cite[4.1.3]{DikProSto}}]
\label{thm:prel:profin}
Let $G$ be a pro-finite group. Then 
\mbox{$G\hspace{-2pt} =\hspace{-2pt} \prod\limits_{p} \hspace{-2pt} G_p$}, 
where each $G_p$ is a pro-$p$-group.
\end{ftheorem}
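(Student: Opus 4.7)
The plan is to reduce to the classical primary decomposition of torsion abelian groups via Pontryagin duality (assuming, as throughout the paper, that $G$ is abelian). Since $G$ is compact and totally disconnected, every continuous character $\chi\colon G \rightarrow \mathbb{T}$ has a totally disconnected, hence finite, image. Therefore the Pontryagin dual $\widehat{G}$ is a discrete torsion abelian group, and so admits the primary decomposition $\widehat{G} = \bigoplus_{p} (\widehat{G})_p$, where $(\widehat{G})_p$ denotes the $p$-primary component.

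Next I would dualize. Using that the Pontryagin dual of a direct sum of discrete abelian groups is the product of their duals, one obtains $G \cong \widehat{\widehat{G}} \cong \prod_p \widehat{(\widehat{G})_p}$. Set $G_p := \widehat{(\widehat{G})_p}$. Each $G_p$ is compact (being the dual of a discrete group) and is a pro-$p$-group: for any $x \in G_p$ and any $\chi \in (\widehat{G})_p$, one has $\chi(p^n x) = p^n \chi(x) = 0$ as soon as $n$ exceeds the order of $\chi$. Since the annihilators of finite subsets of $(\widehat{G})_p$ form a neighborhood base of $0$ in $G_p$, this shows $p^n x \longrightarrow 0$.

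An alternative, more direct route would be to write $G = \varprojlim_N G/N$ over the open finite-index subgroups, apply the primary decomposition $G/N = \bigoplus_p (G/N)_p$ on each finite quotient, and observe that this decomposition is natural in $N$: for $N' \subseteq N$, the canonical projection $G/N' \rightarrow G/N$ carries $(G/N')_p$ into $(G/N)_p$ since homomorphisms preserve element orders. Passing to the inverse limit then yields the desired decomposition. The main obstacle in either approach is bookkeeping the topologies, which is resolved by the fact that any continuous bijective homomorphism between compact Hausdorff groups is a topological isomorphism; once the algebraic decomposition is obtained at finite levels, the topological statement follows automatically.
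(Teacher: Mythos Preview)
The paper does not actually provide a proof of this theorem; it is stated with citations to \cite{ArmacostLCA}, \cite[Corollary~8.8(ii)]{HofMor}, and \cite[4.1.3]{DikProSto}, and is used as a black box in the proof of Proposition~\ref{prop:qcs:profin}. So there is no ``paper's own proof'' to compare against.

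That said, your proposal is correct. The Pontryagin-duality route you sketch is exactly the standard argument in the abelian case (and you are right to flag the abelian hypothesis, since the statement fails for general pro-finite groups). The key steps---that a compact totally disconnected abelian group has torsion dual, that the primary decomposition of the dual dualizes to a product decomposition of $G$, and that the dual of a discrete $p$-group is a pro-$p$-group---are all sound. Your alternative inverse-limit argument is likewise correct and is closer in spirit to how one would prove it without duality; the naturality of the primary decomposition under the transition maps is precisely what makes the limit split. Either argument would suffice as a proof here.
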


\section{LCA groups that admit a non-trivial quasi-convex null sequence}
\label{sect:qcs}

In this section, we prove Theorem~\ref{thm:qcs:main} by using 
two intermediate steps: First, we consider direct products of finite 
cyclic groups, and then we show that Theorem~\ref{thm:qcs:main} holds for 
pro-finite groups. We start off with a lemma that allows us to
relate non-trivial quasi-convex null sequences in closed (and open) 
subgroups to those in the ambient group.

\begin{lemma} \label{lemma:qcs:subgr}
Let $G$ be a locally compact abelian group, and $H$ a closed subgroup.

\begin{myalphlist}

\item
If $H$ admits a non-trivial quasi-convex null sequence, then so does $G$.

\item
If $H$ is open in $G$, then $H$ admits a non-trivial quasi-convex null 
sequence if and only if $G$ does.

\end{myalphlist}
\end{lemma}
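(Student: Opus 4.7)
The plan is to prove (a) directly using two standard Pontryagin duality facts about a closed subgroup $H$ of an LCA group $G$, and then derive (b) from (a) together with an intersection argument. The two facts I will use are: every $\psi \in \widehat{H}$ extends to some $\chi \in \widehat{G}$; and the characters of the Hausdorff LCA group $G/H$ separate points. Throughout, I will exploit the elementary observation that $\mathbb{T}_+$ contains no non-trivial subgroup of $\mathbb{T}$, so for every $t \in \mathbb{T}\setminus\{0\}$ some integer multiple $nt$ lies outside~$\mathbb{T}_+$.

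For (a), I take a non-trivial quasi-convex null sequence $\{x_n\} \subseteq H$ and set $S = \{0\} \cup \{\pm x_n \mid n \in \mathbb{N}\}$. Since $H$ carries the subspace topology, $x_n \to 0$ in $G$ and $S$ is still infinite, so only quasi-convexity of $S$ in $G$ needs to be verified. Fix $g \in G \setminus S$. If $g \in H \setminus S$, quasi-convexity of $S$ inside $H$ produces $\psi \in \widehat{H}$ with $\psi(S) \subseteq \mathbb{T}_+$ and $\psi(g) \notin \mathbb{T}_+$; extending $\psi$ to a character of $G$ yields the required separating character. If $g \notin H$, then $g + H$ is non-zero in $G/H$, so some $\chi_0 \in \widehat{G/H}$ satisfies $\chi_0(g+H) \neq 0$; by the elementary observation above, replacing $\chi_0$ by a suitable multiple ensures $\chi_0(g+H) \notin \mathbb{T}_+$, and composing with the quotient map $G \to G/H$ delivers $\chi \in \widehat{G}$ vanishing on $H$, hence on $S$, but with $\chi(g) \notin \mathbb{T}_+$.

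For (b), the implication ``$H$ admits $\Rightarrow$ $G$ admits'' is immediate from (a), since open subgroups of Hausdorff groups are closed. For the converse, I begin with a non-trivial quasi-convex null sequence $\{x_n\} \subseteq G$ and $S = \{0\} \cup \{\pm x_n\}$. Because $H$ is an open neighborhood of $0$, all but finitely many $x_n$ lie in $H$; put $T = S \cap H$, which is still infinite, symmetric, and null. Two short duality arguments finish the proof. First, $H$ is itself quasi-convex in $G$: its polar is the annihilator $H^\perp$ (again by the ``no non-trivial subgroup of $\mathbb{T}_+$'' observation), and $H^{\perp\perp}=H$ by Pontryagin duality; since intersections of quasi-convex sets are quasi-convex, $T$ is quasi-convex in $G$. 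Second, for any $h \in H \setminus T$, a witnessing $\chi \in \widehat{G}$ for the quasi-convexity of $T$ in $G$ restricts to $\chi|_H \in \widehat{H}$, and $\chi|_H$ shows $h \notin Q_H(T)$; therefore $T$ is quasi-convex in $H$ as well.

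I expect no serious obstacle---everything reduces to routine duality manipulations once the extension, restriction, and separation results for $\widehat{H}$, $\widehat{G}$, and $\widehat{G/H}$ are in place. The only conceptual point worth emphasising is the transfer of quasi-convexity of a subset $T \subseteq H$ between $G$ and $H$: extension of characters handles the passage from $H$ to $G$ needed for (a), while restriction of characters (combined with the fact that closed subgroups are quasi-convex) handles the passage from $G$ to $H$ needed for the non-trivial direction of~(b).
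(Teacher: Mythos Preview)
Your proof is correct and follows essentially the same route as the paper's: both rely on the Pontryagin duality facts that closed subgroups are quasi-convex (i.e., $H^\triangleright=H^\perp$ and $H^{\perp\perp}=H$) and that characters of $H$ extend to $G$, and for~(b) both intersect $S$ with the open subgroup $H$ and check quasi-convexity there. The only cosmetic difference is packaging: the paper invokes Proposition~\ref{prop:qcs:qc-hom} (functoriality of quasi-convex hulls under continuous homomorphisms) to handle both directions at once, whereas you unwind these hull manipulations into explicit elementwise separation arguments.
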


In order to prove Lemma~\ref{lemma:qcs:subgr},  we rely on
the following general property of the quasi-convexity hull, which will 
also be used in the proof of Theorem~\ref{thm:qcs:cyclic} below.

\begin{fproposition}[{\cite[I.3(e)]{GLdualtheo}, \cite[2.7]{DikLeo}}]
\label{prop:qcs:qc-hom}
If $f\colon G\to H$ is a continuous homomorphism of abelian
topological groups, and $E\subseteq G$, then $f(Q_G(E))\subseteq 
Q_H(f(E))$.
\end{fproposition}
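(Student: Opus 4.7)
The plan is to unwind the definition of the quasi-convex hull and use the fact that a continuous homomorphism $f\colon G \to H$ induces, by precomposition, a map $\widehat{H} \to \widehat{G}$ sending $\chi \mapsto \chi \circ f$. This ``dual'' map will pull back elements of the polar of $f(E)$ to elements of the polar of $E$, which is exactly what is needed.

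More precisely, I would start by fixing $x \in Q_G(E) = E^{\triangleright\triangleleft}$ and aim to show that $f(x) \in f(E)^{\triangleright\triangleleft} = Q_H(f(E))$. By Definition~\ref{def:into:qc}, this means I must verify that $\chi(f(x)) \in \mathbb{T}_+$ for every $\chi \in f(E)^\triangleright \subseteq \widehat{H}$. So let such a $\chi$ be given. The key observation is that $\chi \circ f$ is a continuous character of $G$, and $(\chi \circ f)(E) = \chi(f(E)) \subseteq \mathbb{T}_+$ by the choice of $\chi$; hence $\chi \circ f \in E^\triangleright$.

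Now the hypothesis $x \in E^{\triangleright\triangleleft}$ says precisely that $\psi(x) \in \mathbb{T}_+$ for every $\psi \in E^\triangleright$. Applying this with $\psi = \chi \circ f$ yields $\chi(f(x)) = (\chi \circ f)(x) \in \mathbb{T}_+$, as required. Since $\chi \in f(E)^\triangleright$ was arbitrary, we conclude $f(x) \in f(E)^{\triangleright\triangleleft} = Q_H(f(E))$, completing the argument.

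There is no real obstacle here: the proof is a direct consequence of the fact that polars are defined via characters and that characters on $H$ pull back through $f$ to characters on $G$. The only thing to be careful about is notation — keeping the polars in $\widehat{G}$ and $\widehat{H}$ straight — but otherwise the inclusion $f(Q_G(E)) \subseteq Q_H(f(E))$ drops out of the definitions in a few lines.
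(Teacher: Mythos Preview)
Your argument is correct and is the standard one: pulling back characters along $f$ shows that $\chi\in f(E)^\triangleright$ implies $\chi\circ f\in E^\triangleright$, from which the inclusion follows immediately. Note that the paper does not actually supply its own proof of this proposition---it is quoted as a fact from \cite[I.3(e)]{GLdualtheo} and \cite[2.7]{DikLeo}---so there is no in-paper argument to compare against, but what you have written is exactly the short computation one finds in those references.
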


\begin{proof}[Proof of Lemma~\ref{lemma:qcs:subgr}.]
(a) Let \mbox{$\iota\colon H \rightarrow G$} denote the inclusion. 
By Proposition~\ref{prop:qcs:qc-hom}, 
\begin{align}
Q_H(S) \subseteq \iota^{-1}(Q_G(S))=Q_G(S)\cap H.
\end{align}
On the other hand, since $H$ is a subgroup, $H^\triangleright = H^\perp$,
where  \mbox{$H^\perp = \{\chi \hspace{-2pt}\in\hspace{-2pt} \widehat G
\mid\chi(H)=\{0\}\}$} is the annihilator of $H$ in $\widehat G$.
Thus, by Pontryagin duality (cf.~\cite[Theorems~37 and~54]{Pontr}),
\begin{align}
Q_G(H)=(H^\perp)^\triangleright = H^{\perp\perp}=H,
\end{align}
and so \mbox{$Q_G(S) \subseteq Q_G(H)=H$}. Finally, 
\mbox{$Q_G(S) \subseteq Q_H(S)$},
because by Pontryagin duality, every character of $H$ extends to a
character of $G$ (cf.~\cite[Theorem~54]{Pontr}).

\pagebreak[2]

(b) The necessity of the condition follows from (a). In order to show 
sufficiency, let $\{x_n\}_{n=0}^\infty$ be a non-trivial quasi-convex null 
sequence in $G$, and put $S=\{0\}\cup \{\pm x_n \mid n \in \mathbb{N}\}$.
Let \mbox{$\iota\colon H \rightarrow G$} denote the inclusion. Then,
by Proposition~\ref{prop:qcs:qc-hom}, \mbox{$\iota^{-1}(S)=S\cap H$} is 
quasi-convex in $H$. Since $H$ is a 
neighborhood of $0$ and $\{x_n\}_{n=0}^\infty$ is a non-trivial null 
sequence, the intersection \mbox{$S\cap H$} is infinite. Therefore, the 
subsequence $\{x_{n_k}\}_{k=0}^\infty$ of  $\{x_n\}_{n=0}^\infty$ 
consisting of the members that belong to $H$ is a~non-trivial quasi-convex 
null sequence in $H$.
\end{proof}

\begin{lemma} \label{lemma:qcs:Z23}
Let $G$ be an abelian topological group of exponent $2$ or $3$. Then $G$ 
admits no \mbox{non-trivial} quasi-convex null sequences.  In particular, 
the groups $\mathbb{Z}_2^\kappa$ and $\mathbb{Z}_3^\kappa$ admit no 
non-trivial quasi-convex null sequences for any cardinal $\kappa$.
\end{lemma}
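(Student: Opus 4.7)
The plan is to exploit the severe restriction that exponent $2$ or $3$ places on character values. If $G$ has exponent $2$, every $\chi \in \widehat G$ takes values in $\{0, \tfrac{1}{2}\}$; if $G$ has exponent $3$, in $\{0, \tfrac{1}{3}, \tfrac{2}{3}\}$. Since neither $\tfrac{1}{2}$ nor $\pm\tfrac{1}{3}$ lies in $\mathbb{T}_+=\pi([-\tfrac{1}{4},\tfrac{1}{4}])$, the condition $\chi(x) \in \mathbb{T}_+$ forces $\chi(x) = 0$ for every $\chi \in \widehat G$ and every $x \in G$. Hence, for any $E\subseteq G$ and any $A\subseteq \widehat G$, the polars reduce to the usual annihilators: $E^\triangleright = E^\perp$ and $A^\triangleleft = A^\perp$. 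I expect this collapse to be the only substantive step.

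With this observation in hand, $Q_G(E) = E^{\triangleright\triangleleft} = (E^\perp)^\perp = \bigcap_{\chi \in E^\perp}\ker\chi$ is an intersection of kernels of continuous characters and hence a closed subgroup of $G$. In particular, every quasi-convex subset of $G$ is automatically a closed subgroup. Now assume for contradiction that $\{x_n\}_{n=0}^\infty$ is a non-trivial quasi-convex null sequence in $G$, and set $S = \{0\}\cup\{\pm x_n \mid n\in\mathbb{N}\}$. Then $S$ is a closed subgroup of $G$; it is compact, being a union of two convergent sequences together with their common limit $0$; and it is at most countable.

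The final ingredient is the well-known fact that a countable compact Hausdorff group is necessarily finite, a standard consequence of the Baire category theorem: a countable compact Hausdorff space, written as a countable union of its closed singletons, must contain an isolated point, and translation within the group propagates isolation to every point, so the group is discrete and hence finite. Applied to $S$, this contradicts the assumption that $\{x_n\}$ is non-trivial, proving the first assertion. The particular case of $\mathbb{Z}_2^\kappa$ and $\mathbb{Z}_3^\kappa$ is then immediate, since these groups have exponents $2$ and $3$, respectively.
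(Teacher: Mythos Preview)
Your proof is correct and follows essentially the same approach as the paper: both hinge on the observation that for elements of order at most $3$, the condition $\chi(x)\in\mathbb{T}_+$ forces $\chi(x)=0$, so polars collapse to annihilators and every quasi-convex set is a closed subgroup. The only difference is in the final step: the paper argues directly that in $S=\{0\}\cup\{\pm x_n\}$ every nonzero point is isolated (since the only accumulation point of a null sequence is $0$), and then invokes homogeneity to conclude $0$ is isolated too, contradicting $x_n\to 0$ with $S$ infinite; you instead package this as ``countable compact Hausdorff group $\Rightarrow$ finite'' via Baire, which is a slightly heavier but perfectly valid route to the same contradiction.
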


\begin{proof}
Observe that if $x$ is an element of order $2$ or $3$ in an
abelian topological group~$G$,~and 
\mbox{$\chi(x) \in \mathbb{T}_+$}, then \mbox{$\chi(x)=0$}, and so 
\mbox{$\chi \in \langle x \rangle^\perp$}. Thus, if 
\mbox{$S \subseteq G$} and $S$ consists of elements of order at 
most~$3$,~then
\mbox{$S^\triangleright = S^\perp=\langle S\rangle^\perp$} is a closed 
subgroup of $\widehat G$. Consequently, 
$Q_G(S)=(\langle S\rangle^{\perp})^\perp$ is a~subgroup of $G$.
Therefore, if $S$ is a non-trivial null sequence, then 
$S \subsetneq Q_G(S)$, because every group is homogeneous. 
Hence, no abelian topological group contains non-trivial quasi-convex 
sequences consisting of elements of order at most~$3$. Since each element
in $\mathbb{Z}_2^\kappa$ and $\mathbb{Z}_3^\kappa$ has order at most 
$3$, this completes the proof.
\end{proof}

Lemma~\ref{lemma:qcs:Z23} combined with Lemma~\ref{lemma:qcs:subgr}(b) 
yields the following consequence.

\begin{corollary} \label{cor:qcs:Z23}  
Let $G$ be a locally compact abelian group, and $\kappa$ a cardinal. If 
$G$ contains an open subgroup that is topologically isomorphic to
$\mathbb{Z}_2^\kappa$ or $\mathbb{Z}_3^\kappa$, then $G$ admits no 
non-trivial quasi-convex null sequences. \qed
\end{corollary}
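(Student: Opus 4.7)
The plan is to combine the two previous results in a direct way, since the corollary is essentially an immediate consequence of Lemma~\ref{lemma:qcs:Z23} and Lemma~\ref{lemma:qcs:subgr}(b). First I would fix an open subgroup $H \subseteq G$ that is topologically isomorphic to $\mathbb{Z}_2^\kappa$ or $\mathbb{Z}_3^\kappa$. By Lemma~\ref{lemma:qcs:Z23}, $H$ admits no non-trivial quasi-convex null sequences, since it has exponent $2$ or $3$.

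Next I would invoke Lemma~\ref{lemma:qcs:subgr}(b): because $H$ is open in $G$, the ambient group $G$ admits a non-trivial quasi-convex null sequence if and only if $H$ does. Since $H$ admits none, neither does $G$. This completes the argument.

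There is essentially no obstacle here; the only thing worth emphasizing is the use of \emph{openness} of $H$ in the hypothesis, which is precisely what allows the application of Lemma~\ref{lemma:qcs:subgr}(b) (mere closedness of $H$ would only give the one-way implication in part~(a), namely that a non-trivial quasi-convex null sequence in $H$ would yield one in $G$, which is the wrong direction for this corollary). The compactness of $\mathbb{Z}_2^\kappa$ and $\mathbb{Z}_3^\kappa$ plays no direct role in the proof, but is implicit in the way the hypothesis will later be matched with item~(iii) of Theorem~\ref{thm:qcs:main}.
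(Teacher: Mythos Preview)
Your proposal is correct and matches the paper's own approach exactly: the paper simply states that Lemma~\ref{lemma:qcs:Z23} combined with Lemma~\ref{lemma:qcs:subgr}(b) yields the corollary, and omits any further proof.
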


\begin{theorem} \label{thm:qcs:cyclic}
Let $\{m_k\}_{k=1}^\infty$ be a sequence of integers such that
\mbox{$m_k \geq 4$} \ for every 
\mbox{$k\in\mathbb{N}$}. Then the product
$P=\prod\limits_{k=0}^\infty \mathbb{Z}_{m_k}$ admits a non-trivial 
quasi-convex null sequence.
\end{theorem}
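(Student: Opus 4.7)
The plan is to take $x_k \in P$ to be the element with a $1$ in the $k$-th coordinate and $0$ elsewhere, and to show that $S := \{0\} \cup \{\pm x_k \mid k \in \mathbb{N}\}$ is a non-trivial quasi-convex null sequence. Nullness of the sequence follows from the product topology on $P$, and non-triviality from $m_k \geq 4$ (which ensures $x_k \neq -x_k$); the substance is quasi-convexity of $S$.

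I would identify $\widehat P$ with $\bigoplus_k \mathbb{Z}_{m_k}$ (taking each $j_l$ as a balanced integer of absolute value $\leq m_l/2$), so that a character $\chi = (j_l)$ acts by $\chi((g_l)) = \sum_l j_l g_l / m_l \pmod 1$, and $\chi \in S^\triangleright$ if and only if $|j_l| \leq m_l/4$ for every $l$. Given $g \in P \setminus S$, the goal is to build such a $\chi$ with $\chi(g) \notin \mathbb{T}_+$. I would split into two cases. If some coordinate $g_k$ of $g$ has balanced absolute value $a \geq 2$, take $\chi$ supported only at position $k$: set $j_k = \pm 1$ when $a > m_k/4$, and otherwise $j_k = \pm(\lfloor m_k/(4a) \rfloor + 1)$ with suitable sign, in which case $m_k \geq 8$ necessarily. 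In both subcases one verifies $|j_k| \leq m_k/4$ and that the balanced representative of $j_k g_k$ modulo $m_k$ has absolute value strictly greater than $m_k/4$, so $\chi(g) \notin \mathbb{T}_+$. Otherwise every nonzero coordinate of $g$ is $\pm 1$ modulo its factor, and since $g \notin S$ there are at least two such positions $k_1, k_2$; then take $\chi$ supported on $\{k_1, k_2\}$ with $j_{k_i} := g_{k_i} \lfloor m_{k_i}/4 \rfloor$, giving $\chi(g) = \lfloor m_{k_1}/4 \rfloor/m_{k_1} + \lfloor m_{k_2}/4 \rfloor/m_{k_2}$.

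The main obstacle I expect is the numerical estimate in this last case: I would prove that $\lfloor m/4 \rfloor/m \geq 1/7$ for every integer $m \geq 4$, with equality only at $m=7$, by writing $m = 4q + r$ with $r \in \{0,1,2,3\}$ and minimizing the ratio $q/(4q+r)$. Granted this, each summand of $\chi(g)$ lies in $[1/7, 1/4]$, whence $\chi(g) \in [2/7, 1/2]$; since $2/7 > 1/4$, this forces $\chi(g) \notin \mathbb{T}_+$. The tight margin $2/7 - 1/4 = 1/28$ is precisely where the hypothesis $m_k \geq 4$ is essential, in keeping with Lemma~\ref{lemma:qcs:Z23}.
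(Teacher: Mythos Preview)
Your proof is correct and follows essentially the same strategy as the paper: the same sequence $\{e_k\}$ of coordinate vectors, and for the case where all nonzero coordinates are $\pm 1$, the same two-coordinate characters with coefficients $\lfloor m_{k_i}/4\rfloor$. The one notable difference is how you handle a coordinate with $|g_k|\geq 2$: the paper dispatches this in one line by invoking Proposition~\ref{prop:qcs:qc-hom} together with the cited fact (\cite[7.8]{Aussenhofer}) that $\{0,\pm 1\}$ is quasi-convex in $\mathbb{Z}_m$ for $m\geq 4$, whereas you build the separating character by hand. Your route is more self-contained; the paper's is shorter. You also make explicit the estimate $\lfloor m/4\rfloor/m\geq 1/7$, which the paper's final implication (``$l_k>0$, so (\ref{eq:qcs:k12}) implies $c_{k_2}=0$'') leaves to the reader.
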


\begin{proof}
Let $\pi_k\colon P \rightarrow \mathbb{Z}_{m_k}$ denote the canonical 
projection for each $k \in \mathbb{N}$, and let $e_n \in P$ be such that
$\pi_k(e_n)=0$ if $k\neq n$, and $\pi_k(e_k)$ generates 
$\mathbb{Z}_{m_k}$. Clearly,  $\{e_n\}_{n=0}^\infty$ is a non-trivial null 
sequence, and so it remains to be seen that it is quasi-convex. To that 
end, put $S=\{0\} \cup \{\pm e_n\mid n \in \mathbb{N}\}$. Since
$\pi_k(S) = \{0,\pm \pi_k(e_k)\}$ is quasi-convex in 
$\mathbb{Z}_{m_k}$ for every $k \in \mathbb{N}$ 
(cf.~\cite[7.8]{Aussenhofer}), by 
Proposition~\ref{prop:qcs:qc-hom},
\begin{align} \label{eq:qcs:QPS}
Q_P(S) \subseteq \bigcap\limits_{k\in \mathbb{N}} 
\pi_k^{-1}(Q_{\mathbb{Z}_{m_k}}(\pi_k(S))) = 
\bigcap\limits_{k\in \mathbb{N}} \pi_k^{-1}(\{0,\pm \pi_k(e_k)\}) = 
\prod\limits_{k\in \mathbb{N}} \{0,\pm \pi_k(e_k)\}.
\end{align}
Every element in the compact group $P$ can be expressed in the form
\mbox{$x=\sum\limits_{k \in \mathbb{N}} c_k e_k$}, where 
\mbox{$c_k \in \mathbb{Z}_{m_k}$} for every \mbox{$k \in \mathbb{N}$}. 
For each \mbox{$k \in \mathbb{N}$}, let 
\mbox{$\chi_k\colon P \rightarrow \mathbb{T}$} denote the 
continuous character 
defined by \mbox{$\chi_k(x) = \frac{c_k}{m_k} + \mathbb{Z}$}, and put
$l_k = \lfloor \frac {m_k} 4 \rfloor$. (As $\chi_k$ factors through 
$\pi_k$, it is indeed continuous.) Then
\begin{align}
l_k\chi_k(e_n) = \begin{cases}
\frac{l_k}{m_k} & n=k\\
0 & n \neq k.
\end{cases}
\end{align}
Consequently, \mbox{$(l_{k_1} \chi_{k_1} \pm l_{k_2} \chi_{k_2})(e_n) \in 
\mathbb{T}_+$} for every \mbox{$k_1 \neq k_2$}, and thus
\mbox{$l_{k_1} \chi_{k_1} \pm l_{k_2} \chi_{k_2} \in S^\triangleright$}.
Let \mbox{$x \in Q_P(S)\backslash\{0\}$}. By (\ref{eq:qcs:QPS}), 
\mbox{$x=\sum\limits_{k \in \mathbb{N}} c_k e_k$}, where 
\mbox{$c_k\in \{0,1,-1\}$}. Let \mbox{$k_1 \in \mathbb{N}$} be the 
smallest index such that \mbox{$c_{k_1} \neq 0$}. By replacing $x$ with 
$-x$ if necessary, we may assume that \mbox{$c_{k_1}=1$}. Let 
$k_2 \in \mathbb{N}$ be such that $k_2 \neq k_1$. By what we have shown so 
far, $l_{k_1}\chi_{k_1} + c_{k_2}l_{k_2}\chi_{k_1} \in S^\triangleright$.
Therefore,
\begin{align} \label{eq:qcs:k12}
\tfrac{l_{k_1}}{m_{k_1}} + \tfrac{l_{k_2} c_{k_2}^2}{m_{k_2}} + \mathbb{Z}
= (l_{k_1}\chi_{k_1} + c_{k_2}l_{k_2}\chi_{k_1})(x) \in \mathbb{T}_+.
\end{align}
Since $m_k \geq 4$ for all $k \in \mathbb{N}$, one has $l_k >0$, and so
(\ref{eq:qcs:k12}) implies that $c_{k_2}=0$. Hence, $c_k = 0$ for all 
$k \neq k_1$. This shows that $x \in S$,  and $S$ is 
quasi-convex, as desired.
\end{proof}

\begin{corollary}
\label{cor:qcs:nosub}
Let $G$ be a locally compact abelian group that admits no 
non-trivial 
quasi-convex null sequences. Then $G$ has no subgroups that are
topologically isomorphic to: 

\begin{myalphlist}

\item
$\mathbb{J}_p$ for some prime $p$,

\item
$\mathbb{Z}_{p^2}^\omega$ \ for some prime $p$, 

\item
$\mathbb{Z}_p^\omega$ \ for $p > 3$,

\item
$\mathbb{T}$, or

\item
$\mathbb{R}$.

\end{myalphlist}
\end{corollary}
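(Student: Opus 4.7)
The plan is to prove the contrapositive: I will show that each of the five listed groups admits a non-trivial quasi-convex null sequence. Then, invoking Lemma~\ref{lemma:qcs:subgr}(a), the presence of a closed subgroup topologically isomorphic to any of them in $G$ would force $G$ itself to admit such a sequence, contradicting the hypothesis. Note that each group on the list is locally compact, and a locally compact subgroup of a Hausdorff topological group is automatically closed, so Lemma~\ref{lemma:qcs:subgr}(a) applies in every case.

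For part (a), I split on the prime. For $p\in\{2,3\}$, the existence of a non-trivial quasi-convex null sequence in $\mathbb{J}_p$ is recorded in Example~\ref{ex:intro:TJ23R}. For $p\geq 5$, I apply Theorem~\ref{thm:Jp:main} with the increasing sequence $a_n=n$: the resulting $y_n=p^n$ tend to $0$ in $\mathbb{J}_p$, and $L_{\underline a,p}$ is quasi-convex by that theorem, so $\{y_n\}_{n=0}^\infty$ is the required sequence. Parts (d) and (e) are immediate from Example~\ref{ex:intro:TJ23R}, which records such sequences in $\mathbb{T}$ and in $\mathbb{R}$.

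For parts (b) and (c), I appeal to Theorem~\ref{thm:qcs:cyclic}. For (b), take $m_k=p^2\geq 4$ for every $k$; then $\prod_{k=0}^\infty \mathbb{Z}_{p^2}=\mathbb{Z}_{p^2}^\omega$ admits a non-trivial quasi-convex null sequence. For (c), take $m_k=p\geq 5>4$ for every $k$; then $\prod_{k=0}^\infty \mathbb{Z}_p=\mathbb{Z}_p^\omega$ admits one as well. In both cases the hypothesis $m_k\geq 4$ of Theorem~\ref{thm:qcs:cyclic} is satisfied.

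There is no real obstacle here: the corollary is essentially a packaging of Lemma~\ref{lemma:qcs:subgr}(a) together with Example~\ref{ex:intro:TJ23R}, Theorem~\ref{thm:Jp:main}, and Theorem~\ref{thm:qcs:cyclic}. The only mild subtlety is verifying that we may assume the subgroup is closed, which is handled by the general fact that locally compact subgroups of Hausdorff topological groups are closed.
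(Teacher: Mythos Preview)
Your proof is correct and follows essentially the same approach as the paper: show each of the five groups admits a non-trivial quasi-convex null sequence (via Example~\ref{ex:intro:TJ23R}, Theorem~\ref{thm:Jp:main}, and Theorem~\ref{thm:qcs:cyclic}) and then invoke Lemma~\ref{lemma:qcs:subgr}(a). Your explicit remark that a locally compact subgroup of a Hausdorff group is automatically closed is a welcome clarification that the paper leaves implicit.
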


\begin{proof}
By Example~\ref{ex:intro:TJ23R} and Theorem~\ref{thm:Jp:main},
for every prime $p$, the group $\mathbb{J}_p$ admits a non-trivial 
quasi-convex null sequence. By Theorem~\ref{thm:qcs:cyclic},
for every prime $p$, the countable product
$\mathbb{Z}_{p^2}^\omega$ admits a non-trivial 
quasi-convex null sequence, and if \mbox{$p > 3$}, then so does the group
$\mathbb{Z}_{p}^\omega$. Finally,  by Example~\ref{ex:intro:TJ23R},
$\mathbb{T}$ and $\mathbb{R}$ admit a non-trivial quasi-convex
null sequence. Hence, all five statements follow by 
Lemma~\ref{lemma:qcs:subgr}(a).
\end{proof}

\begin{corollary} \label{cor:qcs:pro-p}
Let $p$ be a prime, and $G$ an abelian pro-$p$-group. Then $G$ 
admits no non-trivial quasi-convex null sequences if and only if 

\begin{myromanlist}

\item
$p> 3$ and $G$ is finite, or

\item
$p \leq 3$ and
$G \cong \mathbb{Z}_p^\kappa \times F$
for some cardinal $\kappa$ and finite group $F$.

\end{myromanlist}
\end{corollary}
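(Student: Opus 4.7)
For sufficiency, if (i) holds then $G$ is finite, so every null sequence in $G$ is eventually zero and hence trivial. If (ii) holds, then $F$ is finite and discrete in $G\cong\mathbb{Z}_p^\kappa\times F$, so $\mathbb{Z}_p^\kappa\times\{0\}$ is an open subgroup of $G$ topologically isomorphic to $\mathbb{Z}_p^\kappa$ with $p\leq 3$; Corollary~\ref{cor:qcs:Z23} then rules out non-trivial quasi-convex null sequences in $G$.

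For necessity, suppose $G$ admits no non-trivial quasi-convex null sequences. My plan is to first reduce $G$ to a compact bounded abelian $p$-group via the exotic torus machinery, and then impose cardinality constraints using Theorem~\ref{thm:qcs:cyclic}. Corollary~\ref{cor:qcs:nosub}(a) already rules out any closed copy of $\mathbb{J}_p$ inside $G$. For $q\neq p$, a closed copy of $\mathbb{J}_q$ in $G$ would be simultaneously a pro-$q$-group and (as a closed subgroup of the pro-$p$-group $G$) a pro-$p$-group, and the decomposition of Theorem~\ref{thm:prel:profin} forces such a group to be trivial. Hence $G$ is an exotic torus, and Theorem~\ref{thm:prel:ET} supplies a closed subgroup $B=\prod_q B_q$ with $G/B\cong\mathbb{T}^n$ and each $B_q$ a compact bounded abelian $q$-group. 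Every quotient of a pro-$p$-group is again pro-$p$, but $\mathbb{T}^n$ is not pro-$p$ for $n\geq 1$, so $n=0$ and $G=B$. The same pro-$p$/pro-$q$ obstruction as above forces $B_q=0$ for each $q\neq p$, leaving $G=B_p$, a compact bounded abelian $p$-group.

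The Pontryagin dual of $G$ is therefore a discrete bounded abelian $p$-group, which by Pr\"ufer's theorem is a direct sum of finite cyclic groups; dualizing back yields $G\cong\prod_{n=1}^N \mathbb{Z}_{p^n}^{\kappa_n}$ for some $N\in\mathbb{N}$ and cardinals $\kappa_n$. If some $\kappa_n$ with $n\geq 2$ were infinite, then $\mathbb{Z}_{p^n}^\omega$ would embed as a closed subgroup of $G$; since $p^n\geq 4$, Theorem~\ref{thm:qcs:cyclic} combined with Lemma~\ref{lemma:qcs:subgr}(a) would produce a non-trivial quasi-convex null sequence in $G$, contradicting the hypothesis. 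Thus $\kappa_n$ is finite for every $n\geq 2$. If $p>3$, the same argument applied with $n=1$ (valid since $p\geq 5\geq 4$) forces $\kappa_1$ to be finite as well, so $G$ is finite, giving (i). If $p\leq 3$, setting $\kappa:=\kappa_1$ and $F:=\prod_{n=2}^N \mathbb{Z}_{p^n}^{\kappa_n}$---finite by the previous sentence---gives $G\cong\mathbb{Z}_p^\kappa\times F$, which is (ii). The chief delicacy is the exotic torus reduction: both the ``no $\mathbb{J}_q$ for $q\neq p$'' step and the collapse of the $\mathbb{T}^n$ factor and the $B_q$ with $q\neq p$ require reconciling the pro-$p$ hypothesis with the general exotic torus structure via Theorem~\ref{thm:prel:profin}. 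Once $G$ is a bounded abelian $p$-group, the remainder is a direct cardinality count.
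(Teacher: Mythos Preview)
Your proof is correct and follows essentially the same route as the paper's: reduce to an exotic torus via Corollary~\ref{cor:qcs:nosub}(a), use Theorem~\ref{thm:prel:ET} and the pro-$p$ hypothesis to conclude $G$ is a compact bounded $p$-group, decompose it as a product of cyclic $p$-groups, and then bound the multiplicities via Theorem~\ref{thm:qcs:cyclic} (the paper packages these last steps as Corollary~\ref{cor:qcs:nosub}(b)(c), but the content is the same). Your argument is in fact slightly more explicit than the paper's in one place: the paper writes ``$G$ contains no subgroup topologically isomorphic to $\mathbb{J}_p$, and so $G$ is an exotic torus,'' silently using the pro-$p$ hypothesis to exclude $\mathbb{J}_q$ for $q\neq p$, whereas you spell out the pro-$p$/pro-$q$ incompatibility.
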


\begin{proof}
Suppose that $G$ admits no non-trivial quasi-convex null sequences. Then, 
by Corollary~\ref{cor:qcs:nosub}(a), $G$ contains no subgroup that is 
topologically isomorphic to $\mathbb{J}_p$, and so $G$ is an exotic torus.
Let $B$ be a 
closed subgroup of $G$ provided by  Theorem~\ref{thm:prel:ET}. Since $G$ 
is a pro-$p$-group, it  has no connected quotients. So, $n=0$, and 
$G=B$. Thus, $G=B_p$ is a compact bounded $p$-group (as $G$ contains 
no elements of order coprime to $p$). Consequently, $G$ is topologically 
isomorphic to a~product of finite cyclic groups 
(cf.~\cite[4.2.2]{DikProSto}).
Hence, $G\cong \prod\limits_{i=1}^N \mathbb{Z}_{p^i}^{\kappa_i}$ for
some cardinals $\{\kappa_i\}_{i=1}^N$.~By 
Corollary~\ref{cor:qcs:nosub}(b), $G$ contains no subgroup that is
topologically isomorphic to $\mathbb{Z}_{p^2}^\omega$.
Therefore, $\kappa_i$~is finite for \mbox{$i \geq 2$}, and 
$G \cong \mathbb{Z}_p^{\kappa_1} \times F$ for some 
finite group $F$. Hence, by Corollary~\ref{cor:qcs:nosub}(c), 
$\kappa < \omega$  or $p \leq 3$, as desired.

Conversely, if $G$ is finite, then it contains no non-trivial sequences, 
and if (ii) holds, then $G$ contains an open subgroup $H$ that is
topologically isomorphic to $\mathbb{Z}_2^\kappa$ or $\mathbb{Z}_3^\kappa$, 
so the statement follows by Corollary~\ref{cor:qcs:Z23}.
\end{proof}

\begin{proposition} \label{prop:qcs:profin}
Let $G$ be an abelian pro-finite group. Then $G$
admits no non-trivial quasi-convex null sequences if and only if
$G\cong \mathbb{Z}_2^\kappa \times F$ or 
$G\cong \mathbb{Z}_3^\kappa \times F$
for some cardinal $\kappa$ and finite abelian group $F$.
\end{proposition}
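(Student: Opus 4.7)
The plan is to reduce the proposition to Corollary~\ref{cor:qcs:pro-p} via the prime decomposition of abelian pro-finite groups supplied by Theorem~\ref{thm:prel:profin}. For the reverse implication, if $G \cong \mathbb{Z}_p^\kappa \times F$ with $p \in \{2,3\}$ and $F$ finite, then $\{0\}$ is open in $F$, so $\mathbb{Z}_p^\kappa \times \{0\}$ is an open subgroup of $G$ that is topologically isomorphic to $\mathbb{Z}_p^\kappa$, and Corollary~\ref{cor:qcs:Z23} immediately gives that $G$ admits no non-trivial quasi-convex null sequences. The work lies in the forward direction.

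For the forward direction, I would assume that $G$ admits no non-trivial quasi-convex null sequences and write $G = \prod_p G_p$ as in Theorem~\ref{thm:prel:profin}. Each factor $G_p$ is a closed subgroup of $G$, so Lemma~\ref{lemma:qcs:subgr}(a) implies that $G_p$ itself admits no such sequence. Applying Corollary~\ref{cor:qcs:pro-p} to each $G_p$ then yields: $G_p$ is finite whenever $p > 3$, and $G_p \cong \mathbb{Z}_p^{\kappa_p} \times F_p$ with $F_p$ finite whenever $p \leq 3$.

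Two structural claims remain: (a) only finitely many factors $G_p$ with $p \geq 5$ are non-trivial, and (b) $\kappa_2$ and $\kappa_3$ cannot both be infinite. For (a), if infinitely many primes $p \geq 5$ satisfied $G_p \neq \{0\}$, I would choose a non-trivial element $g_p$ in each such $G_p$ and form the closed subgroup $\prod_p \langle g_p \rangle$ of $G$. This is a countable product of finite cyclic groups of orders at least $p \geq 5 \geq 4$, so Theorem~\ref{thm:qcs:cyclic} produces a non-trivial quasi-convex null sequence inside it, which Lemma~\ref{lemma:qcs:subgr}(a) lifts back to $G$, contradicting the hypothesis. For (b), if both $\kappa_2$ and $\kappa_3$ were infinite, then $G$ would contain $\mathbb{Z}_2^\omega \times \mathbb{Z}_3^\omega$ as a closed subgroup; pairing factors and using $\mathbb{Z}_2 \times \mathbb{Z}_3 \cong \mathbb{Z}_6$ identifies this topological group with $\prod_{n \in \omega} \mathbb{Z}_6$, so Theorem~\ref{thm:qcs:cyclic} (with $m_k = 6$) again supplies a forbidden non-trivial quasi-convex null sequence.

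Combining (a) and (b), the product $\prod_{p \geq 5} G_p$ is a finite product of finite groups and hence finite, and at most one of $G_2, G_3$ has an infinite $\mathbb{Z}_p^{\kappa_p}$ component, while the other is finite. Collecting every finite piece into a single finite abelian group $F$, I obtain $G \cong \mathbb{Z}_p^\kappa \times F$ for some $p \in \{2,3\}$ (taking $\kappa = 0$ when $G$ itself is finite). I expect the main obstacle to be step (b): since Lemma~\ref{lemma:qcs:Z23} rules out non-trivial quasi-convex null sequences in pure $\mathbb{Z}_2^\omega$ or pure $\mathbb{Z}_3^\omega$, one cannot argue within a single prime, and the key insight is that mixing the primes $2$ and $3$ yields cyclic factors of order $6 \geq 4$, which brings the configuration within the hypotheses of Theorem~\ref{thm:qcs:cyclic}.
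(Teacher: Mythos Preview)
Your proposal is correct and follows essentially the same route as the paper's proof: decompose $G$ into its $p$-primary components via Theorem~\ref{thm:prel:profin}, apply Corollary~\ref{cor:qcs:pro-p} to each factor, rule out infinitely many non-trivial $G_p$ for $p\geq 5$ using Theorem~\ref{thm:qcs:cyclic}, and rule out $\kappa_2,\kappa_3$ both infinite via the identification $\mathbb{Z}_2^\omega\times\mathbb{Z}_3^\omega\cong\mathbb{Z}_6^\omega$ together with Theorem~\ref{thm:qcs:cyclic}. The only cosmetic difference is that the paper picks a copy of $\mathbb{Z}_{p_k}$ inside each non-trivial $G_{p_k}$, whereas you take the cyclic group generated by an arbitrary non-trivial element; since either choice gives cyclic factors of order at least $5$, Theorem~\ref{thm:qcs:cyclic} applies in both cases.
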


\begin{proof}
Suppose that $G$ admits no non-trivial quasi-convex null sequences.
By Lemma~\ref{lemma:qcs:subgr}(a), no closed subgroup of $G$ admits a
non-trivial quasi-convex null sequence. By Theorem~\ref{thm:prel:profin}, 
\mbox{$G\hspace{-1pt}= \hspace{-1pt}\prod\limits_{p} G_p$}, where each 
$G_p$ is a pro-$p$-group. Since each $G_p$ is a closed subgroup, $G_p$ 
admits no non-trivial quasi-convex null sequences. Therefore, by 
Corollary~\ref{cor:qcs:pro-p}, for each  \mbox{$p>3$}, the subgroup $G_p$ 
is finite. Put 
\mbox{$F_0 = \prod\limits_{p>3} G_p$}. If $F_0$ is infinite, then 
there are infinitely many primes 
\mbox{$p_k > 3$} such that 
\mbox{$G_{p_k} \neq 0$}. Consequently, $F_0$ (and thus $G$) contains a 
subgroup that is topologically isomorphic to the product
\mbox{$P=\prod\limits_{k=1}^\infty \mathbb{Z}_{p_k}$}. However, by 
Theorem~\ref{thm:qcs:cyclic}, $P$ does admit a non-trivial quasi-convex 
null sequence, contrary to our assumption
(and Lemma~\ref{lemma:qcs:subgr}(a)). This contradiction shows that $F_0$ 
is finite.
Since $G_2$ and $G_3$ contain no non-trivial quasi-convex null sequences, 
by Corollary~\ref{cor:qcs:pro-p}, $G_2\cong\mathbb{Z}_2^{\kappa_2}\times F_2$ 
and $G_3 \cong \mathbb{Z}_3^{\kappa_3} \times F_3$ for some cardinals
$\kappa_2$ and $\kappa_3$, and finite groups $F_2$ and $F_3$. Thus,
\begin{align}
G = \prod\limits_{p} G_p = G_2 \times G_3 \times F_0 \cong
\mathbb{Z}_2^{\kappa_2} \times  \mathbb{Z}_3^{\kappa_3} \times F_0\times 
F_2 \times F_3,
\end{align}
and it remains to be seen that at least one of $\kappa_2$ and $\kappa_3$ 
is finite. Assume the contrary. Then $G$ contains a (closed) subgroup that 
is topologically isomorphic to 
\mbox{$\mathbb{Z}_2^\omega \times \mathbb{Z}_3^\omega \cong 
\mathbb{Z}_6^\omega$}, which does admit a non-trivial quasi-convex null 
sequence by Theorem~\ref{thm:qcs:cyclic}, contrary to our assumption
(and Lemma~\ref{lemma:qcs:subgr}(a)). Therefore, at least one of 
$\mathbb{Z}_2^{\kappa_2} \times F_0\times F_2 \times F_3$ and
$\mathbb{Z}_3^{\kappa_3} \times F_0\times F_2 \times F_3$ is finite.

Conversely,  if $G\cong \mathbb{Z}_2^\kappa \times F$ or
$G\cong \mathbb{Z}_3^\kappa \times F$ where $F$ is finite, then $G$ 
contains an open subgroup $H$ that is
topologically isomorphic to $\mathbb{Z}_2^\kappa$ or $\mathbb{Z}_3^\kappa$,
so the statement follows by Corollary~\ref{cor:qcs:Z23}.
\end{proof}

\begin{proof}[Proof of Theorem~\ref{thm:qcs:main}.]
We first consider the special case where $G$ is compact. Clearly, in this 
case, (ii) $\Leftrightarrow$ (v) and (iii) $\Leftrightarrow$ (iv).

(i) $\Rightarrow$ (v): Suppose that $G$ admits no non-trivial
quasi-convex null sequences. Let $K$ denote the connected component of 
$G$. By Lemma~\ref{lemma:qcs:subgr}(a), $K$ admits no non-trivial
quasi-convex null sequences. Thus, by Corollary~\ref{cor:qcs:nosub}(a), 
$K$ is an exotic torus, and by Theorem~\ref{thm:prel:ET}, $K$ contains 
a~closed subgroup $B$ such that $B = \prod\limits_{p} B_p$, where each 
$B_p$ is a finite $p$-group, and $K/B \cong \mathbb{T}^n$ for some 
$n \in \mathbb{T}$. The group $B$ is pro-finite, and by 
Lemma~\ref{lemma:qcs:subgr}(a), it admits no non-trivial
quasi-convex null sequences. Therefore, by 
Proposition~\ref{prop:qcs:profin}, $B\cong \mathbb{Z}_2^\kappa \times F$
or $B\cong \mathbb{Z}_3^\kappa \times F$,
where $\kappa$ is some cardinal, and $F$ is a finite abelian group.
Since $B_2$ and $B_3$ are finite, this implies that $B$ itself is finite.
Consequently, by Pontryagin duality, 
$\widehat B \cong \widehat K / B^\perp$ is finite 
(cf.~\cite[Theorem~54]{Pontr}), and 
$B^\perp \cong \widehat{K/B} = \mathbb{Z}^n$
(cf.~\cite[Theorem~37]{Pontr}). This implies that $\widehat K$ is finitely 
generated. On the other hand, $\widehat K$ is torsion free, because $K$ is 
connected (cf.~\cite[Example~73]{Pontr}), which means that
$\widehat K = \mathbb{Z}^n$ and $K \cong \mathbb{T}^n$. By 
Corollary~\ref{cor:qcs:nosub}(d), $K$ contains no subgroup that is 
topologically isomorphic to $\mathbb{T}$. Hence, $n=0$, and $K=0$.
This shows that (i) implies that $G$ is pro-finite (i.e., 
a zero-dimensional compact group). The statement follows now by 
Proposition~\ref{prop:qcs:profin}.

(v) $\Rightarrow$ (iv) is obvious, because (v) implies that $G$ is 
a compact bounded group; therefore,~$G$~is a direct product of finite 
cyclic  groups (cf.~\cite[4.2.2]{DikProSto}).

(iv) $\Rightarrow$ (i): It follows from (iv) that $G$ is pro-finite, and 
thus the statement is a consequence of Proposition~\ref{prop:qcs:profin}.

\pagebreak[2]

Suppose now that $G$ is a locally compact abelian group.

(i) $\Rightarrow$ (ii): Suppose that $G$ admits no non-trivial 
quasi-convex null sequences. There is \mbox{$n\in\mathbb{N}$} and a closed 
subgroup $M$ such that $M$ has an open compact subgroup $K$, and
\mbox{$G \cong \mathbb{R}^n \times M$} (cf.~\cite[3.3.10]{DikProSto}). 
By Corollary~\ref{cor:qcs:nosub}(e), \mbox{$n=0$}, and so 
\mbox{$G=M$} and $K$ is open 
in $G$. By Lemma~\ref{lemma:qcs:subgr}(a), $K$ admits no non-trivial
quasi-convex null sequences. Thus, by what we have shown so far,
$K$ has an open compact subgroup $O$ that is topologically isomorphic to
$\mathbb{Z}_2^\kappa$ or $\mathbb{Z}_3^\kappa$ for some cardinal $\kappa$.
Since $K$ is open in $G$, it follows that $O$ is open in $G$. Therefore,
one of $G[2]$ and $G[3]$ is open in $G$, because $O \subseteq G[2]$ or 
$O \subseteq G[3]$.

(ii) $\Rightarrow$ (iii): Let $L$ be a bounded locally compact abelian 
group. Then there   is $n\in\mathbb{N}$ and a~closed subgroup 
$M$ such that $M$ has an open compact subgroup $K$, and
\mbox{$L \cong \mathbb{R}^n \times M$} (cf.~\cite[3.3.10]{DikProSto}).
Since $L$ is bounded, 
$n=0$, and thus $L$ admits an open compact bounded subgroup $K$. 
Consequently, $K$ is a direct product of finite cyclic groups 
(cf.~\cite[4.2.2]{DikProSto}). By this argument, for every locally compact 
abelian group $G$, the subgroups $G[2]$ and $G[3]$ contain open subgroups 
$O_2$ and $O_3$, respectively, such that
$O_2 \cong \mathbb{Z}_2^{\kappa_2}$ and  
$O_3 \cong \mathbb{Z}_3^{\kappa_3}$
for some cardinals  $\kappa_2$  and $\kappa_3$. If one of $G[2]$ and 
$G[3]$ is open in $G$, then $O_2$ or $O_3$ is open in $G$, as desired. 

(iii) $\Rightarrow$ (i) follows by Corollary~\ref{cor:qcs:Z23}.
\end{proof}

\section{Sequences of the form $\boldsymbol{\{p^{a_n}\}_{n=1}^\infty}$ in 
$\boldsymbol{\mathbb{J}_p}$}

\label{sect:Jp}

In this section, we present the proof of Theorem~\ref{thm:Jp:main}. 
We start off by establishing a few preliminary facts concerning 
representations of elements 
in $\mathbb{T}$, which are recycled and reused in \S\ref{sect:Tp}, 
where Theorem~\ref{thm:Tp:main} is proven.
We identify points of $\mathbb{T}$ with $(-1/2,1/2]$. Let 
\mbox{$p>2$} be a 
prime. Recall that every
\mbox{$y \in (-1/2,1/2]$} can be written in the form
\begin{align} \label{eq:Jp:repy}
y=\sum\limits_{i=1}^\infty \frac{c_i}{p^i} = 
\frac{c_1}{p} + \frac{c_2}{p^2} + \cdots + \frac{c_s}{p^s}+\cdots,
\end{align}
where $c_i\in\mathbb{Z}$ and $|c_i| \leq \frac{p-1}{2}$ for all
$i \in \mathbb{N}$.

\begin{theorem} \label{thm:Jp:c1}
Let \mbox{$p>2$} be a prime, and 
\mbox{$y=\sum\limits_{i=1}^\infty \frac{c_i}{p^i} \in \mathbb{T}$},
where  \mbox{$c_i\in\mathbb{Z}$} and 
\mbox{$|c_i| \leq \frac{p-1}{2}$}
for all \mbox{$i \in \mathbb{N}$}.
\begin{myalphlist}

\item
If $y \in \mathbb{T}_+$, then $|c_1| \leq \lfloor\tfrac{p+2}{4} \rfloor$.

\item
If $my \in \mathbb{T}_+$ for all $m=1,\ldots, \lceil\tfrac{p}{2} \rceil$,
then $c_1=0$.

\item
If $my \in \mathbb{T}_+$ for all $m=1,\ldots, \lceil\tfrac{p}{6} \rceil$,
then $c_1\in \{-1,0,1\}$.

\end{myalphlist}
\end{theorem}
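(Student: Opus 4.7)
The plan is driven by a single tail estimate: writing $r := \sum_{i=2}^\infty c_i/p^i$, the bound $|c_i|\le(p-1)/2$ and a geometric series give
\[
|r|\;\le\;\sum_{i=2}^{\infty}\frac{(p-1)/2}{p^i} \;=\; \frac{1}{2p}.
\]
Part (a) follows immediately by selecting the representative of $y$ with $|y|\le 1/4$: then $|c_1|/p\le|y|+|r|\le 1/4+1/(2p)$, and integrality of $c_1$ yields $|c_1|\le\lfloor(p+2)/4\rfloor$.

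For (b) and (c), the main ingredient is the following lemma, which I would prove first: \emph{if $y$ is represented in $(-1/2,1/2]$ and $my\in\mathbb{T}_+$ for every $m=1,\dots,M$, then $|y|\le 1/(4M)$.} I would argue by contradiction. Assume $y>1/(4M)$ (the case $y<0$ is symmetric) and let $m_0$ be the smallest positive integer with $m_0 y>1/4$. From $y>1/(4M)$ one gets $m_0\le M$, while minimality of $m_0$ gives $(m_0-1)y\le 1/4$, hence $m_0 y\le y+1/4\le 1/2$. Therefore $m_0 y\in(1/4,1/2]$, which lies at distance strictly greater than $1/4$ from every integer; so $m_0 y\notin\mathbb{T}_+$, contradicting the hypothesis at $m_0\le M$.

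With the lemma in hand, (b) and (c) reduce to simple numerics. The tail estimate yields the lower bound $|y|\ge|c_1|/p-|r|\ge(2|c_1|-1)/(2p)$ whenever $c_1\ne 0$. For (b), $M=\lceil p/2\rceil=(p+1)/2$ together with the lemma gives $|y|\le 1/(2(p+1))$, while $|c_1|\ge 1$ forces $|y|\ge 1/(2p)>1/(2(p+1))$, a contradiction; hence $c_1=0$. For (c), primality of $p$ with $p>2$ implies $6\nmid p$, so $\lceil p/6\rceil\ge(p+1)/6$, and the lemma gives $|y|\le 3/(2(p+1))$; were $|c_1|\ge 2$, the lower bound $|y|\ge 3/(2p)>3/(2(p+1))$ would contradict this, so $|c_1|\le 1$.

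I expect the main obstacle to be the lemma itself: a priori the condition $my\in\mathbb{T}_+$ admits $y$ in any translate $[k/m-1/(4m),\,k/m+1/(4m)]$, $k\in\mathbb{Z}$, and the proof must rule out every $k\ne 0$ after intersecting over $m$. Choosing $m_0$ as the first witness where $m_0 y$ crosses $1/4$ bypasses an explicit case analysis over $k$.
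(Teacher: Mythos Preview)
Your proposal is correct and follows essentially the same route as the paper. Both arguments rest on the tail estimate $|r|\le 1/(2p)$ together with the fact that $my\in\mathbb{T}_+$ for $m=1,\dots,M$ forces $|y|\le 1/(4M)$; the paper simply invokes this as the known identity $\{1,\dots,l\}^{\triangleleft}=\mathbb{T}_l$, whereas you supply a self-contained proof via the first-crossing index $m_0$. One small point: in your lemma, the inequality $y+1/4\le 1/2$ tacitly uses $y\le 1/4$, which does follow from the hypothesis at $m=1$ (namely $y\in\mathbb{T}_+$), but you should say so explicitly.
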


\begin{proof}
Since $|c_i| \leq \tfrac{p-1}{2}$ for all $i \in \mathbb{N}$, one has
\begin{align} \label{eq:Jp:estimate}
\left|\sum\limits_{i=2}^\infty \dfrac{c_i}{p^i} \right| \leq
\sum\limits_{i=2}^\infty \dfrac{|c_i|}{p^i} \leq
\dfrac{p-1}{2}\left(\sum\limits_{i=2}^\infty \dfrac{1}{p^i}\right) = 
\dfrac{p-1}{2} \cdot \dfrac{1}{p^2}\cdot \dfrac{p}{p-1}=\dfrac{1}{2p}.
\end{align}

(a) If $y \in \mathbb{T}_+$, then by (\ref{eq:Jp:estimate}), one obtains 
that
\begin{align}
\left| \dfrac{c_1}{p} \right| \leq 
|y| + \left|\sum\limits_{i=2}^\infty \dfrac{c_i}{p^i} \right|  \leq
\dfrac 1 4 + \dfrac 1 {2p} = \dfrac{p+2}{4p}.
\end{align}
Therefore, $|c_1| \leq \tfrac{p+2}{4}$, and hence
$|c_1| \leq \lfloor \tfrac{p+2}{4} \rfloor$.

\pagebreak[2]

(b) Put $l= \lceil \tfrac p 2 \rceil$. 
If $my \in \mathbb{T}_+$ for all 
$m=1,\ldots,l$, then 
$y \in \{1,\ldots,l\}^\triangleleft =\mathbb{T}_l$, and so
$|y| \leq \tfrac 1 {4l}$. Since $p$ is odd, $\tfrac p 2 < l$.
Thus, $|y| \leq \tfrac 1 {4l} < \tfrac{1} {2p}$.  Therefore, by 
(\ref{eq:Jp:estimate}), one obtains that
\begin{align}
\left| \dfrac{c_1}{p} \right| \leq
|y| + \left|\sum\limits_{i=2}^\infty \dfrac{c_i}{p^i} \right|  <
\dfrac 1{2p} + \dfrac 1 {2p} = \dfrac 1 {p}.
\end{align}
Hence, $c_1=0$, as required.

\pagebreak[2]

(c) Put $l= \lceil \tfrac p 6 \rceil$. 
If $my \in \mathbb{T}_+$ for all 
$m=1,\ldots,l$, then 
$y \in \{1,\ldots,l\}^\triangleleft =\mathbb{T}_l$, and so
$|y| \leq \tfrac 1 {4l}$. Since $p$ is odd, $\tfrac p 6 < l$.
Thus, $|y| \leq \tfrac 1 {4l} < \tfrac{3} {2p}$.  Therefore, by 
(\ref{eq:Jp:estimate}), one obtains that
\begin{align}
\left| \dfrac{c_1}{p} \right| \leq
|y| + \left|\sum\limits_{i=2}^\infty \dfrac{c_i}{p^i} \right|  <
\dfrac 3{2p} + \dfrac 1 {2p} = \dfrac 2 {p}.
\end{align}
Hence, $|c_1| \leq 1$, as required.
\end{proof}

\begin{corollary} \label{cor:Jp:c1}
Let \mbox{$p\geq 5$} be a prime such that \mbox{$p \neq 7$}, and
\mbox{$y=\sum\limits_{i=1}^\infty \frac{c_i}{p^i} \in \mathbb{T}$},
where  \mbox{$c_i\in\mathbb{Z}$} and
\mbox{$|c_i| \leq \frac{p-1}{2}$} for all \mbox{$i \in \mathbb{N}$}.
If $my \in \mathbb{T}_+$ for all $m=1,\ldots, \lfloor\tfrac{p}{4} 
\rfloor$, 
then $c_1\in \{-1,0,1\}$.
\end{corollary}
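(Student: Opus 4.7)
The plan is to reduce this corollary to a direct application of Theorem~\ref{thm:Jp:c1}(c). That theorem already gives the conclusion $c_1 \in \{-1,0,1\}$ under the hypothesis that $my \in \mathbb{T}_+$ holds for all $m = 1, \ldots, \lceil p/6 \rceil$, so it suffices to verify the numerical inequality
\begin{equation*}
\left\lfloor \tfrac{p}{4} \right\rfloor \geq \left\lceil \tfrac{p}{6} \right\rceil
\end{equation*}
for every prime $p \geq 5$ with $p \neq 7$. Once this inequality is established, the set $\{1,\ldots,\lceil p/6 \rceil\}$ is contained in $\{1,\ldots,\lfloor p/4 \rfloor\}$, so the hypothesis of the corollary is stronger than the hypothesis of Theorem~\ref{thm:Jp:c1}(c), and the conclusion follows immediately.

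To prove the inequality, I would first dispose of the small cases by direct inspection: for $p = 5$ both sides equal $1$; for $p = 11$ both sides equal $2$; for $p = 13$ both sides equal $3$; and for $p \in \{17, 19, 23\}$ the left-hand side is at least the right-hand side ($4 \geq 3$, $4 \geq 4$, $5 \geq 4$, respectively). (Here one also sees transparently why $p = 7$ is excluded: $\lfloor 7/4 \rfloor = 1 < 2 = \lceil 7/6 \rceil$.) For $p \geq 24$ one uses the trivial bounds $\lfloor p/4 \rfloor \geq (p-3)/4$ and $\lceil p/6 \rceil \leq (p+5)/6$, which give
\begin{equation*}
\left\lfloor \tfrac{p}{4} \right\rfloor - \left\lceil \tfrac{p}{6} \right\rceil \;\geq\; \tfrac{p-3}{4} - \tfrac{p+5}{6} \;=\; \tfrac{3(p-3)-2(p+5)}{12} \;=\; \tfrac{p-19}{12},
\end{equation*}
which is non-negative whenever $p \geq 19$, hence in particular for all $p \geq 24$.

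The only real obstacle is bookkeeping with the floor and ceiling functions for small primes, but since the corollary explicitly excludes $p=7$ and the inequality is clearly asymptotically wide (the gap $\lfloor p/4 \rfloor - \lceil p/6 \rceil$ grows like $p/12$), the verification reduces to the finite check outlined above. No deeper estimate from $\mathbb{T}$ is needed, because the analytic content of the argument is already packaged inside Theorem~\ref{thm:Jp:c1}(c).
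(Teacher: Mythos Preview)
Your proposal is correct and follows essentially the same approach as the paper: reduce to Theorem~\ref{thm:Jp:c1}(c) via the inequality $\lceil p/6\rceil \leq \lfloor p/4\rfloor$. The only cosmetic difference is that the paper handles $p=5$ by invoking Theorem~\ref{thm:Jp:c1}(a) (since $\lfloor 7/4\rfloor=1$) rather than checking that the inequality already holds there, and simply asserts the inequality for $p\geq 11$ without your more detailed case-by-case and asymptotic verification.
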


\begin{proof}
If \mbox{$p \hspace{-1pt}= \hspace{-1pt} 5$}, then 
\mbox{$y \hspace{-1pt} \in \hspace{-1pt} \mathbb{T}_+$}, 
and by Theorem~\ref{thm:Jp:c1}(a),
\mbox{$|c_1| \hspace{-1pt}\leq \hspace{-1pt} \lfloor \frac 7 4\rfloor 
\hspace{-1pt}= \hspace{-1pt}1$}.   If 
\mbox{$p \hspace{-1pt} \geq \hspace{-1.5pt} 11$}, then
\mbox{$\lceil \tfrac p 6 \rceil \hspace{-1pt}\leq \hspace{-1pt}
 \lfloor \tfrac p 4 \rfloor$}, and the 
statement follows by Theorem~\ref{thm:Jp:c1}(c).
\end{proof}

\begin{example}
Let $y=\tfrac 2 7 - \tfrac{3}{49} + \mathbb{Z} = 
\tfrac{11}{49}+ \mathbb{Z}$. Since $\tfrac{11}{49} < 
\tfrac{12}{48}=\tfrac{1}{4}$, clearly $y \in \mathbb{T}_+$, and thus
$my \in \mathbb{T}_+$ for all 
$1 \leq m \leq \lfloor \tfrac 7 4 \rfloor =1$. However, $c_1 =2$ and
$c_2 = -3$. This shows that the assumption that $p\neq 7$ cannot be 
omitted in Corollary~\ref{cor:Jp:c1}. Nevertheless, a slightly weaker 
statement does hold for all primes $p \geq 5$, including $p=7$.
\end{example}

\begin{corollary} \label{cor:Jp:p-1c1}
Let \mbox{$p\geq 5$} be a prime, and 
\mbox{$y=\sum\limits_{i=1}^\infty \frac{c_i}{p^i} \in \mathbb{T}$},
where  \mbox{$c_i\in\mathbb{Z}$} and \mbox{$|c_i| \leq \frac{p-1}{2}$}
for all \mbox{$i \in \mathbb{N}$}. If $my \in \mathbb{T}_+$ for all 
$m=1,\ldots,  \lfloor\tfrac{p}{4} \rfloor$ and
$(p-1)y \in \mathbb{T}_+$, then
$c_1\in \{-1,0,1\}$.
\end{corollary}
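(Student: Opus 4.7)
The plan is to split the argument into three cases according to the size of $p$, noting that the extra hypothesis $(p-1)y\in\mathbb{T}_+$ does any real work only in the missing case $p=7$ of Corollary~\ref{cor:Jp:c1}.

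First, for $p=5$ one has $\lfloor p/4\rfloor=1$, so the hypothesis includes $y\in\mathbb{T}_+$, and Theorem~\ref{thm:Jp:c1}(a) immediately gives $|c_1|\le\lfloor 7/4\rfloor=1$. Second, for any prime $p\ge 11$ an elementary check shows $\lceil p/6\rceil\le\lfloor p/4\rfloor$, so the hypothesis $my\in\mathbb{T}_+$ for $m=1,\ldots,\lfloor p/4\rfloor$ is already strong enough to invoke Theorem~\ref{thm:Jp:c1}(c) directly. In both of these cases the condition $(p-1)y\in\mathbb{T}_+$ is irrelevant.

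The interesting case is $p=7$, where $\lfloor 7/4\rfloor=1$ forces us to work with just $y\in\mathbb{T}_+$ and $6y\in\mathbb{T}_+$. Theorem~\ref{thm:Jp:c1}(a) only yields $|c_1|\le\lfloor 9/4\rfloor=2$, so it remains to rule out $|c_1|=2$. After replacing $y$ with $-y$ if necessary (both hypotheses and the conclusion are invariant under this operation, since $\mathbb{T}_+$ is symmetric about $0$ and $|-c_i|=|c_i|$), we may assume $c_1=2$. Writing $y=\tfrac{2}{7}+r$ with $r=\sum_{i\ge 2}c_i/7^i$, the tail estimate~(\ref{eq:Jp:estimate}) gives $|r|\le\tfrac{1}{14}$. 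Combined with $|y|\le\tfrac{1}{4}$ and the observation that $\tfrac{2}{7}>\tfrac{1}{4}$, this pins $r$ into the much narrower interval $[-\tfrac{1}{14},-\tfrac{1}{28}]$. Multiplying through by $6$, we see that $6y=\tfrac{12}{7}+6r$ lies in $[\tfrac{9}{7},\tfrac{3}{2}]$, whose reduction modulo~$\mathbb{Z}$ (taken in $(-\tfrac{1}{2},\tfrac{1}{2}]$) lands in $[\tfrac{2}{7},\tfrac{1}{2}]$. Since this interval is disjoint from $[-\tfrac{1}{4},\tfrac{1}{4}]$, this contradicts $6y\in\mathbb{T}_+$ and completes the proof.

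The main obstacle is squeezing $6y$ tightly enough: the raw tail bound $|6r|\le\tfrac{3}{7}$ is too weak to keep $6y$ inside a single fundamental domain, so it is essential to exploit the sharper constraint on $r$ that comes from $y\in\mathbb{T}_+$ combined with $c_1=2$. Once this tighter range for $r$ is secured, the reduction of $6y$ modulo~$\mathbb{Z}$ behaves cleanly and the inequality $\tfrac{2}{7}>\tfrac{1}{4}$ delivers the contradiction.
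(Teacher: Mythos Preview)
Your proof is correct. The reduction to the case $p=7$ matches the paper exactly (the paper simply cites Corollary~\ref{cor:Jp:c1}, which your first two paragraphs essentially re-derive). The treatment of $p=7$, however, is genuinely different. The paper computes the polar $\{1,6\}^\triangleleft = \mathbb{T}_6 \cup (-\tfrac{1}{6}+\mathbb{T}_6) \cup (\tfrac{1}{6}+\mathbb{T}_6)$, reads off the bound $|y|\le \tfrac{5}{24}$, and then combines this with the tail estimate~(\ref{eq:Jp:estimate}) via the triangle inequality to get $|c_1/7| < \tfrac{2}{7}$ directly. Your argument instead assumes $c_1=2$ for contradiction, uses $y\in\mathbb{T}_+$ to sharpen the tail to $r\in[-\tfrac{1}{14},-\tfrac{1}{28}]$, and then checks explicitly that $6y$ falls outside $\mathbb{T}_+$. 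The paper's route is shorter and more conceptual---once one knows the polar of $\{1,6\}$, the bound on $c_1$ is a one-line estimate---whereas your approach is more hands-on but avoids having to identify the polar set; both arrive at the same conclusion with comparable effort.
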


\begin{proof}
In light Corollary~\ref{cor:Jp:c1},
it remains to be seen that the statement holds for \mbox{$p=7$}. In this 
case, it is given that $y,6y \in \mathbb{T}_+$, which means that
\begin{align}
y \in \{1,6\}^\triangleleft = \mathbb{T}_6 \cup 
(-\tfrac 1 6+\mathbb{T}_6) \cup (\tfrac 1 6+\mathbb{T}_6).
\end{align}
Thus, $|y| \leq \tfrac 5 {24}$. Therefore, by
(\ref{eq:Jp:estimate}), one obtains that
\begin{align}
\left| \dfrac{c_1}{7} \right| \leq
|y| + \left|\sum\limits_{i=2}^\infty \dfrac{c_i}{7^i} \right|  \leq
\dfrac{5}{24} + \dfrac{1}{14} = \dfrac{47}{168} < \dfrac{48}{168} = 
\dfrac{2}{7}.
\end{align}
Hence, $|c_1| \leq 1$, as desired.
\end{proof}

We turn now to investigating the set $L_{\underline a,p}$, its polar, and 
its quasi-convex hull. Recall 
that the Pontryagin dual $\widehat {\mathbb{J}}_p$ of $\mathbb{J}_p$ is 
the Pr\"ufer group $\mathbb{Z}(p^\infty)$.
For \mbox{$k \in \mathbb{N}$}, let 
\mbox{$\zeta_k\colon \mathbb{J}_p \rightarrow \mathbb{T}$} denote the 
continuous character defined by 
\mbox{$\zeta_k(1)=p^{-(k+1)}$}. For \mbox{$m \in \mathbb{N}$},
put \mbox{$J_{\underline{a},p,m} = \{ k \in \mathbb{N} \mid m\zeta_k 
\hspace{-1pt}\in \hspace{-2pt}
L_{\underline a,p}^\triangleright \}$} and
\mbox{$Q_{\underline{a},p,m} = 
\{m\zeta_ k \mid k \hspace{-1pt}\in \hspace{-2pt}
J_{\underline{a},p,m}\}^\triangleleft$}.

\begin{lemma} \label{lemma:Jp:J12p}
For $1 \leq m \leq p-1$,
\begin{align}
J_{\underline{a},p,m} =
\begin{cases}
\mathbb{N} & \text{if }\ \tfrac m p \in \mathbb{T}_+\\
\mathbb{N} \backslash \underline{a}
 & \text{if }\ \tfrac m p \not\in \mathbb{T}_+.
\end{cases}
\end{align}
\end{lemma}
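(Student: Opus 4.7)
The plan is to compute $m\zeta_k(y_n)$ explicitly and to determine precisely for which $k$ all these values lie in $\mathbb{T}_+$. Since $\zeta_k$ is the continuous character of $\mathbb{J}_p$ determined by $\zeta_k(1) = p^{-(k+1)}$ and $y_n = p^{a_n}$, one has
\begin{align}
m\zeta_k(y_n) \equiv m\, p^{a_n - k - 1} \pmod{\mathbb{Z}}.
\end{align}
Because $\mathbb{T}_+$ is symmetric about $0$, it follows that $k \in J_{\underline{a},p,m}$ if and only if $m\zeta_k(y_n) \in \mathbb{T}_+$ for every $n \in \mathbb{N}$.

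The analysis splits into three cases according to the sign of $a_n - k - 1$. When $a_n > k$, the quantity $m\,p^{a_n-k-1}$ is a positive integer, hence congruent to $0$ modulo $\mathbb{Z}$ and therefore trivially in $\mathbb{T}_+$. When $a_n < k$, one has $m\zeta_k(y_n) \equiv m/p^{k+1-a_n}$ with $k+1-a_n \geq 2$; since $1 \leq m \leq p-1$ and $p \geq 5$, its absolute value is at most $(p-1)/p^2 \leq 4/25 < 1/4$, so this value is also in $\mathbb{T}_+$. The remaining case is $a_n = k$, which arises for some $n$ if and only if $k \in \underline{a}$; there $m\zeta_k(y_n) = \tfrac{m}{p} + \mathbb{Z}$, which belongs to $\mathbb{T}_+$ precisely when $\tfrac{m}{p} \in \mathbb{T}_+$.

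Combining these cases yields the stated dichotomy: if $k \notin \underline{a}$, only the first two cases are relevant, so $k \in J_{\underline{a},p,m}$ unconditionally; if $k \in \underline{a}$, membership of $k$ in $J_{\underline{a},p,m}$ is equivalent to $\tfrac{m}{p} \in \mathbb{T}_+$. The argument is essentially a bookkeeping exercise, and the only quantitative input is the elementary inequality $(p-1)/p^2 < 1/4$ (which holds whenever $p \neq 2$, since it is equivalent to $(p-2)^2 > 0$); this is what guarantees that terms of the form $m/p^j$ with $j \geq 2$ cannot leave $\mathbb{T}_+$, and thus isolates $a_n = k$ as the only potentially obstructive case.
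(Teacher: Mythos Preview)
Your proof is correct and follows essentially the same approach as the paper's: both compute $m\zeta_k(y_n)=m\,p^{a_n-k-1}$ and observe that this lies in $\mathbb{T}_+$ except possibly when $a_n-k-1=-1$, i.e., $k=a_n$, where membership reduces to $\tfrac{m}{p}\in\mathbb{T}_+$. The paper phrases the key observation as a single equivalence for $m\cdot p^i$, while you unpack it into the three cases $a_n>k$, $a_n<k$, $a_n=k$; the content is the same. One cosmetic remark: the ambient hypothesis in this section is $p>2$, not $p\geq 5$, and indeed (as you note at the end) the inequality $(p-1)/p^2<\tfrac14$ already holds for all odd primes, so your initial restriction to $p\geq 5$ is unnecessary.
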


\begin{proof}
For $1 \leq m \leq p-1$ and $i \in \mathbb{Z}$, 
\begin{align} \label{eq:Zp:pmi}
m\cdot p^{i} \in \mathbb{T}_+  \quad & \Longleftrightarrow \quad
i \neq -1 \vee (i=-1 \wedge \tfrac m p \in \mathbb{T}_+)\\
 & \Longleftrightarrow \quad
i \neq -1 \vee \tfrac m p \in \mathbb{T}_+.
\end{align}
For $k,n \in \mathbb{N}$, one has  $m\zeta_k(x_n) = m \cdot p^{a_n-k-1}$.
Thus, by (\ref{eq:Zp:pmi}) applied to $i=a_n-k-1$,
\begin{align}
m\zeta_k(x_n) \in \mathbb{T}_+ \quad & \Longleftrightarrow \quad
a_n-k-1 \neq -1 \vee  \tfrac m p \in \mathbb{T}_+ \\
& \Longleftrightarrow \quad
k \neq a_n \vee \tfrac m p \in \mathbb{T}_+.
\end{align}
Consequently, $m\zeta_k \in L_{\underline a,p}^\triangleright$ if and only 
if $k \neq a_n$ for all $n \in \mathbb{N}$, or $\tfrac m p \in 
\mathbb{T}_+$. 
\end{proof}

\begin{theorem} \label{thm:Jp:Q12p}
If $p>2$, then 
\mbox{$Q_{\underline{a},p,1} \cap \cdots 
\cap Q_{\underline{a},p,p-1} \subseteq \{
\sum\limits_{n=0}^\infty \varepsilon_n y_n \mid
(\forall n \in \mathbb{N}) (\varepsilon_n \in \{-1,0,1\})\}$}.
\end{theorem}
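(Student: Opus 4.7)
The plan is to pick $z \in Q_{\underline{a},p,1} \cap \cdots \cap Q_{\underline{a},p,p-1}$, write it in its symmetric $p$-adic expansion $z = \sum_{i=0}^\infty c_i p^i$ (with $c_i \in \mathbb{Z}$, $|c_i| \le (p-1)/2$, available because $p$ is odd), and show that $c_k = 0$ for $k \notin \underline{a}$ and $c_k \in \{-1,0,1\}$ for $k \in \underline{a}$. This would immediately give $z = \sum_n c_{a_n} y_n$ with every $\varepsilon_n = c_{a_n}$ in $\{-1,0,1\}$, as required. The pivotal observation is that $\zeta_k(z) = c_k/p + c_{k-1}/p^2 + \cdots + c_0/p^{k+1}$ is exactly of the form $y = \sum_{j \ge 1} c_j'/p^j$ to which Theorem~\ref{thm:Jp:c1} and its corollaries apply, with leading coefficient $c_1' = c_k$. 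Thus the task reduces to identifying, for each $k$, enough values of $m$ for which $m\zeta_k(z) \in \mathbb{T}_+$ and then invoking the appropriate part of Theorem~\ref{thm:Jp:c1}.

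For $k \notin \underline{a}$, Lemma~\ref{lemma:Jp:J12p} places $k$ in $J_{\underline{a},p,m}$ for every $m \in \{1,\dots,p-1\}$, and since $z \in Q_{\underline{a},p,m}$, one obtains $m\zeta_k(z) \in \mathbb{T}_+$ for all such $m$. As $\lceil p/2 \rceil \le p-1$ for every odd prime, Theorem~\ref{thm:Jp:c1}(b) then forces $c_k = 0$.

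For $k \in \underline{a}$, Lemma~\ref{lemma:Jp:J12p} only permits those $m$ with $m/p \in \mathbb{T}_+$; the small range $1 \le m \le \lfloor p/4 \rfloor$ is immediate. The crucial additional input is that $m = p-1$ also qualifies, because $m/p = 1 - 1/p \equiv -1/p \pmod{\mathbb{Z}}$ lies in $\mathbb{T}_+$ whenever $p \ge 5$, so $(p-1)\zeta_k(z) \in \mathbb{T}_+$ is free of charge. These are exactly the hypotheses of Corollary~\ref{cor:Jp:p-1c1} applied to $y = \zeta_k(z)$, which then yields $c_k \in \{-1,0,1\}$. For the remaining case $p = 3$, the bound is automatic from $|c_k| \le (p-1)/2 = 1$.

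The main obstacle in this plan is the $p = 7$ sub-case of $k \in \underline{a}$: there $\lfloor p/4 \rfloor = 1$, so $\zeta_k(z) \in \mathbb{T}_+$ is the only ``small'' constraint, and Theorem~\ref{thm:Jp:c1}(a) by itself only delivers $|c_k| \le \lfloor 9/4 \rfloor = 2$. The resolution is precisely the extra input $(p-1)\zeta_k(z) \in \mathbb{T}_+$ supplied by $-1/p \in \mathbb{T}_+$, which is exactly the auxiliary hypothesis that Corollary~\ref{cor:Jp:p-1c1} was tailored to absorb.
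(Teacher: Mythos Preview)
Your proposal is correct and follows essentially the same approach as the paper's proof: expand in the symmetric $p$-adic digits, use Lemma~\ref{lemma:Jp:J12p} to see which multiples $m\zeta_k$ lie in the polar, apply Theorem~\ref{thm:Jp:c1}(b) at indices $k\notin\underline a$ and Corollary~\ref{cor:Jp:p-1c1} (via the observation $(p-1)/p\equiv -1/p\in\mathbb{T}_+$ for $p\ge 5$) at indices $k\in\underline a$, with the $p=3$ case handled by the trivial bound $|c_k|\le 1$. Your discussion of the $p=7$ obstacle mirrors exactly the role Corollary~\ref{cor:Jp:p-1c1} plays in the paper.
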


\begin{proof}
Recall that every element $x \in \mathbb{J}_p$ can be written in the form
$x= \sum\limits_{i=0}^\infty c_i \cdot p^i$, 
where $c_i \in \mathbb{Z}$ and $|c_i| \leq \frac{p-1}{2}$. For $x$ 
represented in this form, for every $k \in \mathbb{N}$, one has
\begin{align} \label{eq:Jp:zetak}
\zeta_k(x) = \sum\limits_{i=0}^\infty c_i \cdot p^{i-k-1} \equiv_1
\sum\limits_{i=0}^k c_i \cdot p^{i-k-1} = 
\sum\limits_{i=1}^{k+1} \frac{c_{k-i+1}}{p^{i}}.
\end{align}
Let $x \in Q_{\underline{a},p,1} \cap \cdots\cap Q_{\underline{a},p,p-1}$. 
If $k\neq a_n$ for any $n \in\mathbb{N}$, then by 
Lemma~\ref{lemma:Jp:J12p},
\begin{align}
k\in \mathbb{N}\backslash \underline a = 
J_{\underline{a},p,1} = \cdots=J_{\underline{a},p,p-1}.
\end{align}
Thus, for $y=\zeta_k(x)$,  one has $my \in \mathbb{T}_+$ for
$m=1,\ldots,p-1$. Therefore, by Theorem~\ref{thm:Jp:c1}(b),
the coefficient of $\tfrac 1 p$ in (\ref{eq:Jp:zetak}) is zero. Hence,
$c_{k}=0$ for every $k$ such that $k\neq a_n$ for all 
$n\in\mathbb{N}$. For $p=3$, this already completes the proof, 
and so we may assume that $p\geq 5$.
If $k=a_n$ for some $n \in \mathbb{N}$,
then by Lemma~\ref{lemma:Jp:J12p},
\begin{align}
k\in \mathbb{N} = J_{\underline{a},p,1} = \cdots=
J_{\underline{a},p,{\lfloor \tfrac p 4\rfloor}}=
J_{\underline{a},p,p-1}.
\end{align} 
Consequently, for $y=\zeta_k(x)$, one has $my \in \mathbb{T}_+$ for
$m=1,\ldots,\lfloor\tfrac p 4\rfloor$ and $(p-1)y\in\mathbb{T}_+$. 
Therefore, by Corollary~\ref{cor:Jp:p-1c1}, the coefficient of 
$\tfrac 1 p$ in (\ref{eq:Jp:zetak}) belongs to $\{-1,0,1\}$. 
So, \mbox{$c_{k}=c_{a_n} \in \{-1,0,1\}$}. Hence, $x$ has the form
\begin{align}
x= \sum\limits_{n=0}^\infty \dfrac{c_{a_n}}{p^{a_n}} = 
\sum\limits_{n=0}^\infty \varepsilon_n x_n,
\end{align}
where $\varepsilon_n = c_{a_n} \in \{-1,0,1\}$.
\end{proof}

Put $L_p=L_{\mathbb{N},p}$, that is, the special case of 
$L_{\underline{a},p}$, where $\underline a$ is the sequence 
$a_n=n$. First, we show that if $p \geq 5$, then $L_p$ is quasi-convex.

\begin{lemma} \label{lemma:Jp:polar}
Let $p \geq 5$ be a prime. Then
\mbox{$m_1 \zeta_{k_1} + \cdots + m_l \zeta_{k_l} \in L_p^\triangleright$}
\ for every
\mbox{$0 \leq k_1 < \cdots < k_l$} and \mbox{$m_1,\ldots,m_l$} such that
\mbox{$|m_i| \leq \lfloor \tfrac p 4 \rfloor$} for all 
\mbox{$i \leq i \leq l$}.
\end{lemma}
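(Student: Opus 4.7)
My plan is to verify directly that the character
\mbox{$\chi := m_1 \zeta_{k_1} + \cdots + m_l \zeta_{k_l}$}
sends every element of $L_p = \{0\} \cup \{\pm p^n \mid n \in \mathbb{N}\}$ into $\mathbb{T}_+$. Since $\chi$ is a group homomorphism, $\chi(0)=0 \in \mathbb{T}_+$ and $\chi(-p^n) = -\chi(p^n)$, so it suffices to show $\chi(p^n) \in \mathbb{T}_+$ for every $n \in \mathbb{N}$. Using $\zeta_k(1) = p^{-(k+1)}$, one obtains
\begin{align*}
\chi(p^n) = \sum_{i=1}^{l} m_i \, p^{\,n-k_i-1}.
\end{align*}

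Next, I would separate the sum according to the sign of the exponent $n-k_i-1$. The terms with $k_i < n$ contribute integers and hence vanish modulo $\mathbb{Z}$, leaving
\begin{align*}
\chi(p^n) \equiv \sum_{k_i \geq n} \dfrac{m_i}{p^{\,k_i-n+1}} \pmod{\mathbb{Z}}.
\end{align*}
The crucial observation is that since $k_1 < \cdots < k_l$, the exponents $k_i - n + 1$ appearing here are \emph{distinct} positive integers. Together with the hypothesis $|m_i| \leq \lfloor p/4 \rfloor$, this gives
\begin{align*}
\left| \sum_{k_i \geq n} \dfrac{m_i}{p^{\,k_i-n+1}} \right| \leq \lfloor p/4 \rfloor \sum_{j=1}^{\infty} p^{-j} = \dfrac{\lfloor p/4 \rfloor}{p-1}.
\end{align*}

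To conclude, I would check that $\lfloor p/4 \rfloor/(p-1) \leq 1/4$ for every prime $p \geq 5$, which is equivalent to $4 \lfloor p/4 \rfloor \leq p - 1$, i.e.~$p \bmod 4 \geq 1$. Since $p$ is an odd prime, $p \bmod 4 \in \{1,3\}$, so the inequality always holds. Consequently the canonical representative of $\chi(p^n)$ in $(-1/2,1/2]$ lies in $[-1/4,1/4]$, which means $\chi(p^n) \in \mathbb{T}_+$ and $\chi \in L_p^\triangleright$.

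I do not anticipate any substantive obstacle: once the reduction modulo $\mathbb{Z}$ is performed, the argument is a one-line geometric-series estimate. The only delicate point is the tightness of the final numerical bound. Equality $\lfloor p/4 \rfloor/(p-1) = 1/4$ is actually attained for $p \equiv 1 \pmod 4$ (for instance $p=5$ with all $|m_i|=1$), which both shows that the coefficient bound $\lfloor p/4 \rfloor$ cannot be enlarged and explains why the hypothesis $p \geq 5$ is the natural one.
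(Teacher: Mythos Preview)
Your proof is correct and follows essentially the same route as the paper: both compute $\chi(p^n)$, discard the integer terms coming from indices $k_i<n$, bound the remaining tail by a geometric series, and then use the inequality $\lfloor p/4\rfloor \leq (p-1)/4$ (valid since $p$ is odd) to land inside $\mathbb{T}_+$. The only cosmetic difference is that the paper phrases the reduction as ``we may assume $n\leq k_1$'' whereas you split the sum explicitly, and the paper keeps track of the sharper intermediate bound $\tfrac{1}{4p^{k_1-n}}$ before concluding.
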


\pagebreak[2]

\begin{proof}
Let  $p^n \in L_p$. We may assume that \mbox{$n \leq k_1$},
because \mbox{$\eta_{k_i}(p^n)=0$} for every \mbox{$i\in\mathbb{N}$} such 
that \mbox{$k_i<n$}. Since  \mbox{$0\leq k_1 < \cdots < k_l$},
\begin{align}
\frac{1}{p^{k_1+1}} + \cdots + \frac{1}{p^{k_l+1}} \leq
\frac{1}{p^{k_1+1}} + \frac{1}{p^{k_1+2}}+ \cdots + \frac{1}{p^{k_1+l}}
< \sum\limits_{i=0}^\infty \frac{1}{p^{k_1+i+1}} = 
\dfrac{1}{p^{k_1}(p-1)}.
\end{align}
One has $|m_i| \leq \lfloor \tfrac p 4 \rfloor \leq \tfrac{p-1}{4}$ for 
each $i$, because $p$ is odd. Thus,
\begin{align}
|(m_1 \zeta_{k_1} + \cdots + m_l \zeta_{k_l})(p^n)| & \leq
|m_1||\eta_{k_1}(p^n)| + \cdots + |m_l||\eta_{k_l}(p^n)| \\
& \leq \dfrac{p-1}4
\left(\frac{1}{p^{k_1+1}} + \cdots + \frac{1}{p^{k_l+1}} \right) p^n \\
& < \dfrac{p-1}4  \cdot
\dfrac{1}{p^{k_1}(p-1)} \cdot p^n = \dfrac{1}{4 p^{k_1 -n}} \leq 
\dfrac 1 4.
\end{align}
Therefore, $(m_1 \zeta_{k_1} + \cdots + m_l \zeta_{k_l})(p^n)\in 
\mathbb{T}_+$ 
for all $n \in \mathbb{N}$. Hence,
\mbox{$m_1\zeta_{k_1} + \cdots + m_l \zeta_{k_l} \in L_p^\triangleright$}, 
as desired.
\end{proof}

\begin{proposition} \label{prop:Jp:Lp}
Let $p \geq 5$ be a prime. Then $L_p$ is quasi-convex.
\end{proposition}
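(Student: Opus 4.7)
The approach is to verify the nontrivial inclusion $L_p^{\triangleright\triangleleft} \subseteq L_p$, since $L_p \subseteq L_p^{\triangleright\triangleleft}$ always holds. Fix $x \in L_p^{\triangleright\triangleleft}$. Noting that $L_p = L_{\underline a, p}$ for the sequence $\underline a = (0,1,2,\dots)$, Lemma~\ref{lemma:Jp:polar} (with $l=1$) yields $m\zeta_k \in L_p^\triangleright$ for every $k \in \mathbb{N}$ and every integer $m$ with $|m| \leq \lfloor p/4 \rfloor$. Combined with Lemma~\ref{lemma:Jp:J12p}, this places $x$ in the intersection $Q_{\underline a, p, 1}\cap\cdots\cap Q_{\underline a, p, p-1}$, and Theorem~\ref{thm:Jp:Q12p} then furnishes a representation $x = \sum_{n=0}^\infty \varepsilon_n p^n$ with $\varepsilon_n \in \{-1, 0, 1\}$. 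What remains is to show that the support $F = \{n \in \mathbb{N} : \varepsilon_n \neq 0\}$ has at most one element.

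The key step is a two-character separation argument. Assume toward a contradiction that $|F| \geq 2$, and let $n_0 < n_1$ be the two smallest indices in $F$. Since no element of $F$ lies strictly between $n_0$ and $n_1$, and since the terms $\varepsilon_n p^{n-k-1}$ with $n > k$ reduce to integers, one obtains
\begin{align}
\zeta_{n_0}(x) \equiv \varepsilon_{n_0}/p \quad \text{and} \quad \zeta_{n_1}(x) \equiv \varepsilon_{n_1}/p + \varepsilon_{n_0}/p^{n_1-n_0+1} \pmod 1.
\end{align}
I would then form the character $\chi = \varepsilon_{n_0}\lfloor p/4\rfloor\zeta_{n_0} + \varepsilon_{n_1}\lfloor p/4\rfloor\zeta_{n_1}$, which lies in $L_p^\triangleright$ by Lemma~\ref{lemma:Jp:polar}. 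Because $\varepsilon_{n_i}^2 = 1$, this collapses to
\begin{align}
\chi(x) = \frac{2\lfloor p/4 \rfloor}{p} + \frac{\varepsilon_{n_0}\varepsilon_{n_1}\lfloor p/4 \rfloor}{p^{n_1-n_0+1}}.
\end{align}

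The final step is the numerical verification that $\chi(x) \notin \mathbb{T}_+$. Because $n_1 - n_0 + 1 \geq 2$, the perturbation is bounded in absolute value by $\lfloor p/4\rfloor/p^2$. The upper estimate $\chi(x) \leq \lfloor p/4 \rfloor(2p+1)/p^2 < 1/2$, which follows from $\lfloor p/4 \rfloor \leq (p-1)/4$, ensures that no wrap-around occurs inside $\mathbb{T}$. The lower estimate $2\lfloor p/4 \rfloor/p - \lfloor p/4 \rfloor/p^2 > 1/4$ must then be checked for every prime $p \geq 5$: direct computation handles the tight cases $p = 5$ (lower bound $9/25$) and $p = 7$ (lower bound $13/49$), while for $p \geq 11$ the estimate $\lfloor p/4 \rfloor \geq (p-3)/4$ provides ample slack. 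This contradicts the combination $\chi \in L_p^\triangleright$ with $x \in L_p^{\triangleright\triangleleft}$, so $|F| \leq 1$ and hence $x \in L_p$.

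The main obstacle is the small-prime arithmetic, most delicate at $p = 7$: there $2\lfloor p/4\rfloor/p = 2/7$ exceeds the threshold $1/4$ only by $1/28$, so the perturbation must be kept below this slim margin. The rescuing observation is that $n_1 - n_0 \geq 1$ makes the error decay at least as fast as $1/p^2$, which is precisely what is needed to preserve the strict inequality.
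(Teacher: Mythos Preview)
Your proof is correct and takes a genuinely different route from the paper's. Both proofs start identically: apply Theorem~\ref{thm:Jp:Q12p} to write $x=\sum \varepsilon_n p^n$ with $\varepsilon_n\in\{-1,0,1\}$, then aim to show the support has size at most one. From there, however, the paper considers the pair of characters $\zeta_{k_1}\pm\zeta_{k_2}$, applies Corollaries~\ref{cor:Jp:c1} and~\ref{cor:Jp:p-1c1} to the leading digit of each value, and is forced into a three-way case split (with the awkward case $p=7$, $k_2=k_1+1$ handled by an ad hoc computation). You instead choose the single character $\chi=\varepsilon_{n_0}\lfloor p/4\rfloor\zeta_{n_0}+\varepsilon_{n_1}\lfloor p/4\rfloor\zeta_{n_1}$ with signs aligned so that the two leading contributions reinforce to $2\lfloor p/4\rfloor/p$, and then check numerically that the cross-term perturbation $\lfloor p/4\rfloor/p^{n_1-n_0+1}$ is too small to pull the value back into $\mathbb{T}_+$; the crucial point that $n_0,n_1$ are the two \emph{smallest} support indices is exactly what makes $\zeta_{n_0}(x)$ and $\zeta_{n_1}(x)$ computable without interference from higher terms. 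Your argument is more elementary (it bypasses Corollaries~\ref{cor:Jp:c1} and~\ref{cor:Jp:p-1c1} entirely) and structurally uniform across all $p\geq 5$, at the price of a short direct verification for $p=5,7$; the paper's approach, while heavier here, develops machinery that is reused verbatim in~\S\ref{sect:Tp}.
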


\begin{proof}
Let \mbox{$x \in Q_\mathbb{T}(L_p)\backslash\{0\}$}. Then
\mbox{$x \in  Q_{\mathbb{N},p,1} \cap \cdots\cap Q_{\mathbb{N},p,p-1}$}.
Consequently, by Theorem~\ref{thm:Jp:Q12p},
\mbox{$x=\sum\limits_{n=0}^\infty \varepsilon_n p^{n}$}, where
\mbox{$\varepsilon_n \in \{-1,0,1\}$} for all 
\mbox{$n \in \mathbb{N}$}. Observe that for $k\in \mathbb{N}$,
\begin{align}
\zeta_k(x) = 
\sum\limits_{n=0}^\infty \dfrac{\varepsilon_n}{p^{-n+k+1}} \equiv_1
\sum\limits_{n=0}^{k} \dfrac{\varepsilon_n}{p^{-n+k+1}}
= \sum\limits_{i=1}^{k+1}  \frac{\varepsilon_{k-i+1}}{p^{i}}.
\end{align}
Let $k_1$ denote the smallest index such that  $\varepsilon_{k_1}\neq 0$, 
and let $k_2 \in \mathbb{N}$ be such that $k_1 < k_2$. In order to prove 
that $x \in K_p$, it remains to be seen that $\varepsilon_{k_2}=0$. 
In order to simplify notations, we set $\varepsilon_n=0$ for all 
$n \in \mathbb{Z}\backslash\mathbb{N}$. Put 
\begin{align} \label{eq:Jp:y+}
y_+ &=(\zeta_{k_1}+\zeta_{k_2})(x) = 
\sum\limits_{i=1}^{k_1+1}  \frac{\varepsilon_{k_1-i+1}}{p^{i}}
+ \sum\limits_{i=1}^{k_2+1}  \frac{\varepsilon_{k_2-i+1}}{p^{i}} =
\sum\limits_{i=1}^{k_2+1}  
\frac{\varepsilon_{k_1-i+1}+\varepsilon_{k_2-i+1}}{p^{i}}, \text{ and}\\
\label{eq:Jp:y-}
y_- &=(\zeta_{k_1} - \zeta_{k_2})(x) = 
\sum\limits_{i=1}^{k_1+1}  \frac{\varepsilon_{k_1-i+1}}{p^{i}}
- \sum\limits_{i=1}^{k_2+1}  \frac{\varepsilon_{k_2-i+1}}{p^{i}} =
\sum\limits_{i=1}^{k_2+1}
\frac{\varepsilon_{k_1-i+1}-\varepsilon_{k_2-i+1}}{p^{i}}.
\end{align}
By Lemma~\ref{lemma:Jp:polar},
$m\zeta_{k_1}+m\zeta_{k_2}, m\zeta_{k_1}-m\zeta_{k_2} 
\in L_p^\triangleright$ 
for $m=1,\ldots,\lfloor \tfrac p 4 \rfloor$. Thus, for 
$m=1,\ldots,\lfloor \tfrac p 4 \rfloor$,
\begin{align}
\label{eq:Jp:my+}
my_+ & = m (\zeta_{k_1}+\zeta_{k_2})(x) \in \mathbb{T}_+, \text{ and}\\
\label{eq:Jp:my-}
my_- & = m (\zeta_{k_1}-\zeta_{k_2})(x) \in \mathbb{T}_+.
\end{align}
One has 
$|\varepsilon_{k_1-i+1} + \varepsilon_{k_2-i+1}| \leq 2 \leq \tfrac{p-1}2$
and 
$|\varepsilon_{k_1-i+1} - \varepsilon_{k_2-i+1}| \leq 2 \leq 
\tfrac{p-1}2$,
for all $i \in \mathbb{Z}$, because  
\mbox{$|\varepsilon_n| \leq 1$} for all 
\mbox{$n \in \mathbb{Z}$}. By replacing $x$ with $-x$ if necessary, we may 
assume that \mbox{$\varepsilon_{k_1}\hspace{-2pt}=1$}. Put 
\mbox{$\rho=\varepsilon_{k_2}$}. We 
may apply 
Corollary~\ref{cor:Jp:c1} and~\ref{cor:Jp:p-1c1}, but we must distinguish 
between three (overlapping) cases:

1. If $p\neq 7$, then by Corollary~\ref{cor:Jp:c1}, the coefficients
of $\tfrac 1 p$ in (\ref{eq:Jp:y+}) and (\ref{eq:Jp:y-}) are $-1$, $0$, or 
$1$. In other words, $|1 + \rho| \leq 1$ and $|1 - \rho| \leq 1$. Since
$\rho \in \{-1,0,1\}$, this implies that $\rho=0$, as required.

\pagebreak[2]

2. If $k_1+1 < k_2$, then by Lemma~\ref{lemma:Jp:polar},
\begin{align}
(p-1)(\eta_{k_1}+\eta_{k_2}) & = 
\zeta_{k_1-1}-\zeta_{k_1}  +\zeta_{k_2-1}-\zeta_{k_2} \in L_p^\triangleright,
\text{ and} \\
(p-1)(\eta_{k_1}-\eta_{k_2}) & =
\zeta_{k_1-1}-\zeta_{k_1} -\zeta_{k_2-1}+\zeta_{k_2} \in L_p^\triangleright.
\end{align}
(If $k_1=0$, then of course, $\zeta_{k_1-1} = \zeta_{-1} =0$.)
Thus, in addition to (\ref{eq:Jp:my+}) and (\ref{eq:Jp:my-}), one also has
\begin{align}
(p-1)y_+ & = (p-1) (\zeta_{k_1}+\zeta_{k_2})(x) \in \mathbb{T}_+, \text{ and}\\
(p-1)y_- & = (p-1) (\zeta_{k_1}-\zeta_{k_2})(x) \in \mathbb{T}_+.
\end{align}
Therefore, by Corollary~\ref{cor:Jp:p-1c1}, the coefficients of $\tfrac 1 
p$ 
in (\ref{eq:Jp:y+}) and (\ref{eq:Jp:y-}) are $-1$, $0$, or $1$. 
In other words, $|1 + \rho| \leq 1$ and $|1 - \rho| \leq 1$. Since
$\rho \in \{-1,0,1\}$, this implies that $\rho=0$, as required.

3. If $p=7$ and $k_2=k_1+1$, then by what we have shown so far, 
\begin{align}
x = 7^{k_1} +\rho\cdot 7^{k_1+1} = (1+7\rho)\cdot 7^{k_1},
\end{align}
where $\rho \in \{-1,0,1\}$. By Lemma~\ref{lemma:Jp:polar}, 
\begin{align}
(7\rho +1)\zeta_{k_1+1} = \rho\zeta_{k_1} +  \zeta_{k_1+1} 
\in L_7^\triangleright.
\end{align}
Therefore,
\begin{align}
\dfrac{2\rho}{7} + \dfrac{1}{49} =
\dfrac{14\rho +1 }{49} \equiv_1
\dfrac{(7\rho +1)^2}{49} =
(7\rho +1)\zeta_{k_1+1}(x)  \in \mathbb{T}_+.
\end{align}
Hence, $\rho=0$, as required. 
\end{proof}

\begin{proof}[Proof of Theorem~\ref{thm:Jp:main}.]
By Proposition~\ref{prop:Jp:Lp}, $L_p$ is quasi-convex.
Thus,  $L_{\underline{a},p} \subseteq L_p$ implies that
$Q_\mathbb{T}(L_{\underline{a},p}) \subseteq L_p$. On the other hand,
by Theorem~\ref{thm:Jp:Q12p},
\begin{align}
Q_\mathbb{T}(L_{\underline{a},p}) \subseteq
Q_{\underline{a},p,1} \cap \cdots\cap Q_{\underline{a},p,p-1}\subseteq  \{
\sum\limits_{n=0}^\infty \varepsilon_n y_n \mid
(\forall n \in \mathbb{N}) (\varepsilon_n \in \{-1,0,1\})\}.
\end{align}
Therefore,
\begin{align}
Q_\mathbb{T}(L_{\underline{a},p}) \subseteq
L_p \cap \{\sum\limits_{n=0}^\infty \varepsilon_n y_n \mid
(\forall n \in \mathbb{N}) (\varepsilon_n \in \{-1,0,1\})\} = 
L_{\underline{a},p},
\end{align}
as desired.
\end{proof}

\begin{example}\label{ex:Jp:p23}
If $p=2$ or $p=3$, then $Q_{\mathbb{J}_p}(L_p)=\mathbb{J}_p$, that is,
$L_p$ is $qc$-dense in $\mathbb{J}_p$ (cf.~\cite[4.6(c)]{DikLeo}). In 
particular, $L_2$ and $L_3$ are not quasi-convex in $\mathbb{J}_2$ and 
$\mathbb{J}_3$, respectively. Thus, 
Proposition~\ref{prop:Jp:Lp} fails if $p \not\geq 5$. Therefore, 
Theorem~\ref{thm:Jp:main} does not hold for $p \not\geq 5$. 
\end{example}

\section{Sequences of the form $\boldsymbol{\{p^{-(a_n+1)}\}_{n=1}^\infty}$ in
$\boldsymbol{\mathbb{T}}$}

\label{sect:Tp}

In this section, we prove Theorem~\ref{thm:Tp:main}. Recall  that 
the Pontryagin dual $\widehat{\mathbb{T}}$ of 
$\mathbb{T}$ is $\mathbb{Z}$. For \mbox{$k \in \mathbb{N}$}, let 
\mbox{$\eta_k\colon \mathbb{T} \rightarrow \mathbb{T}$} denote the 
continuous character defined by 
\mbox{$\eta_k(x)=p^{k}\cdot x$}. For \mbox{$m \in \mathbb{N}$},
put 
\begin{align}
J_{\underline{a},p,m} = \{ k \in \mathbb{N} \mid m\eta_k \in 
K_{\underline a,p}^\triangleright \} \quad \text{ and } \quad
Q_{\underline{a},p,m} = \{m\eta_ k \mid k \in 
J_{\underline{a},p,m}\}^\triangleleft.
\end{align}

\begin{lemma} \label{lemma:Tp:J12p}
For $1 \leq m \leq p-1$, 
\begin{align}
J_{\underline{a},p,m} = 
\begin{cases}
\mathbb{N} & \text{if }\ \tfrac m p \in \mathbb{T}_+\\
\mathbb{N} \backslash \underline{a}  
 & \text{if }\ \tfrac m p \not\in \mathbb{T}_+.
\end{cases}
\end{align}
\end{lemma}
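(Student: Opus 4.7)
The plan is to argue directly from the defining formula \mbox{$m\eta_k(x_n)=m\cdot p^{k-a_n-1}$} (mod $1$), mirroring the strategy used in the proof of Lemma~\ref{lemma:Jp:J12p} but with the sign of the exponent flipped (since $x_n=p^{-(a_n+1)}$ rather than $p^{a_n}$). I would compute this value and split into three cases depending on the sign of $k-a_n-1$, then check in each case whether the result lies in $\mathbb{T}_+$.

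First, if \mbox{$k\geq a_n+1$}, then \mbox{$k-a_n-1\geq 0$}, so \mbox{$p^{k-a_n-1}\in\mathbb{Z}$}, and thus \mbox{$m\eta_k(x_n)\equiv 0\pmod 1$}, which trivially belongs to $\mathbb{T}_+$. Next, if \mbox{$k\leq a_n-1$}, then \mbox{$a_n+1-k\geq 2$} and
\begin{align}
|m\eta_k(x_n)| = \frac{m}{p^{a_n+1-k}}\leq \frac{p-1}{p^2}\leq \frac 1 4,
\end{align}
where the last inequality is equivalent to \mbox{$(p-2)^2\geq 0$}. Thus \mbox{$m\eta_k(x_n)\in \mathbb{T}_+$} in this case as well. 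The only remaining possibility is \mbox{$k=a_n$}, in which case \mbox{$m\eta_k(x_n)=\tfrac{m}{p}$}, and membership in $\mathbb{T}_+$ is exactly the condition \mbox{$\tfrac m p\in \mathbb{T}_+$}.

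Combining these three cases, for fixed $k$, the implication \mbox{$m\eta_k(\pm x_n)\in \mathbb{T}_+$} fails for some $n$ if and only if there exists $n$ with $k=a_n$ and \mbox{$\tfrac{m}{p}\notin\mathbb{T}_+$}. Therefore, if \mbox{$\tfrac m p\in\mathbb{T}_+$}, then \mbox{$m\eta_k\in K_{\underline a,p}^\triangleright$} for every \mbox{$k\in\mathbb{N}$}, giving \mbox{$J_{\underline a,p,m}=\mathbb{N}$}; while if \mbox{$\tfrac m p\notin\mathbb{T}_+$}, then \mbox{$m\eta_k\in K_{\underline a,p}^\triangleright$} precisely when $k\neq a_n$ for all $n$, yielding \mbox{$J_{\underline a,p,m}=\mathbb{N}\setminus\underline a$}. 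There is no real obstacle here; the only point requiring care is the numerical bound \mbox{$(p-1)/p^2\leq 1/4$}, which I will verify once via \mbox{$(p-2)^2\geq 0$} so the argument covers all primes \mbox{$p\geq 2$} uniformly.
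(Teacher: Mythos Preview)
Your proof is correct and follows essentially the same approach as the paper's own proof. The paper encapsulates your three-case analysis in the single equivalence \mbox{$m\cdot p^i\in\mathbb{T}_+ \Leftrightarrow i\neq -1 \vee \tfrac m p\in\mathbb{T}_+$} (stated without proof) and then specializes to \mbox{$i=k-a_n-1$}, whereas you unpack that equivalence explicitly via the bound \mbox{$(p-1)/p^2\leq 1/4$}; the substance is identical.
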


\begin{proof}
For $1 \leq m \leq p-1$ and $i \in \mathbb{Z}$, 
\begin{align} \label{eq:Tp:pmi}
m\cdot p^{i} \in \mathbb{T}_+  \quad & \Longleftrightarrow \quad
i \neq -1 \vee (i=-1 \wedge \tfrac m p \in \mathbb{T}_+)\\
 & \Longleftrightarrow \quad
i \neq -1 \vee \tfrac m p \in \mathbb{T}_+.
\end{align}
For $k,n \in \mathbb{N}$, one has  $m\eta_k(x_n) = m \cdot p^{k-a_n-1}$.
Thus, by (\ref{eq:Tp:pmi}) applied to $i=k-a_n-1$,
\begin{align}
m\eta_k(x_n) \in \mathbb{T}_+ \quad & \Longleftrightarrow \quad
k-a_n-1 \neq -1 \vee  \tfrac m p \in \mathbb{T}_+ \\
& \Longleftrightarrow \quad
k \neq a_n \vee \tfrac m p \in \mathbb{T}_+.
\end{align}
Consequently, $m\eta_k \in K_{\underline a,p}^\triangleright$ if and only 
if $k \neq a_n$ for all $n \in \mathbb{N}$, or $\tfrac m p \in 
\mathbb{T}_+$. 
\end{proof}

\begin{theorem} \label{thm:Tp:Q12p}
If $p >2$, then 
\mbox{$Q_{\underline{a},p,1} \cap \cdots 
\cap Q_{\underline{a},p,p-1} \subseteq \{
\sum\limits_{n=0}^\infty \varepsilon_n x_n \mid
(\forall n \in \mathbb{N}) (\varepsilon_n \in \{-1,0,1\})\}$}.
\end{theorem}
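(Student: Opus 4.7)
The strategy mirrors that of Theorem~\ref{thm:Jp:Q12p}. Pick
$x \in Q_{\underline{a},p,1} \cap \cdots \cap Q_{\underline{a},p,p-1}$ and write it in the base-$p$ form
\begin{align}
x=\sum_{i=1}^\infty \frac{c_i}{p^i}, \qquad c_i\in\mathbb{Z},\quad |c_i|\leq \tfrac{p-1}{2},
\end{align}
as in (\ref{eq:Jp:repy}). For each $k\in\mathbb{N}$ one has
\begin{align}
\eta_k(x)=p^k x=\sum_{i=1}^{\infty} c_i\,p^{k-i}\equiv_1 \sum_{j=1}^{\infty}\frac{c_{k+j}}{p^{j}},
\end{align}
so the coefficient of $1/p$ in the base-$p$ expansion of $\eta_k(x)$ is precisely $c_{k+1}$. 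The idea is then to read off information about $c_{k+1}$ from membership of $m\eta_k$ in $K_{\underline a,p}^{\triangleright}$, exactly as was done with $\zeta_k$ on $\mathbb{J}_p$.

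Now I would split on whether $k\in\underline{a}$. If $k\neq a_n$ for every $n$, then by Lemma~\ref{lemma:Tp:J12p} one has $k\in J_{\underline{a},p,m}$ for every $m=1,\ldots,p-1$, so $m\eta_k(x)\in\mathbb{T}_+$ for all these $m$. Since $p-1\geq\lceil p/2\rceil$ when $p>2$, Theorem~\ref{thm:Jp:c1}(b) applied to $y=\eta_k(x)$ forces $c_{k+1}=0$. Consequently, $c_i=0$ whenever $i-1\notin\underline{a}$. For $p=3$ this already completes the argument, because the constraint $|c_i|\leq(p-1)/2=1$ automatically places each remaining $c_{a_n+1}$ in $\{-1,0,1\}$. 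For $p\geq 5$, I would then treat the case $k=a_n$: here Lemma~\ref{lemma:Tp:J12p} still guarantees $k\in J_{\underline{a},p,m}$ whenever $m/p\in\mathbb{T}_+$, which for $p\geq 5$ includes both $m=1,\ldots,\lfloor p/4\rfloor$ (since $m/p\leq 1/4$) and $m=p-1$ (since $(p-1)/p\equiv -1/p\pmod{1}$ has $|{-1/p}|\leq 1/4$). Thus $my\in\mathbb{T}_+$ for these $m$ with $y=\eta_k(x)$, and Corollary~\ref{cor:Jp:p-1c1} yields $c_{a_n+1}=c_{k+1}\in\{-1,0,1\}$.

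Combining the two cases, $x=\sum_{n=0}^{\infty}\varepsilon_n/p^{a_n+1}=\sum_{n=0}^{\infty}\varepsilon_n x_n$ with $\varepsilon_n=c_{a_n+1}\in\{-1,0,1\}$, which is the desired containment. The step that requires the most care is not any hard inequality but rather the verification, hidden inside Lemma~\ref{lemma:Tp:J12p}, that the specific multipliers needed to invoke Theorem~\ref{thm:Jp:c1}(b) and Corollary~\ref{cor:Jp:p-1c1} are actually available in $J_{\underline{a},p,m}$; this is the place where the hypothesis $p>2$ (to get enough small $m$) and the small gymnastic with $m=p-1$ (needed for $p\geq 5$ in the $k=a_n$ case) come into play. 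Otherwise the argument is a verbatim transcription of the proof of Theorem~\ref{thm:Jp:Q12p}, with $\zeta_k$ replaced by $\eta_k$ and the indexing of the $c_i$'s reversed so that $c_{k+1}$, rather than $c_{k-i+1}$, records the leading digit of the transformed point.
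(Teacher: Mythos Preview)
Your proposal is correct and follows essentially the same approach as the paper: the paper's proof likewise expands $\eta_k(x)$ to identify $c_{k+1}$ as the leading digit, applies Theorem~\ref{thm:Jp:c1}(b) when $k\notin\underline a$ to get $c_{k+1}=0$, and for $p\ge 5$ applies Corollary~\ref{cor:Jp:p-1c1} when $k=a_n$ using the multipliers $1,\ldots,\lfloor p/4\rfloor$ and $p-1$ to get $c_{a_n+1}\in\{-1,0,1\}$. In fact your write-up is slightly more explicit than the paper's, which simply says ``arguing as in the proof of Theorem~\ref{thm:Jp:Q12p}'' at the two key steps.
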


\begin{proof}
Let  
\mbox{$x=\sum\limits_{i=1}^\infty \dfrac{c_i}{p^i}$} be a representation of 
\mbox{$x\in \mathbb{T}$}. 
Observe that for every \mbox{$k\in \mathbb{N}$}, one has
\begin{align} \label{eq:Tp:etak}
\eta_{k}(x) = p^k  x = 
\sum\limits_{i=1}^\infty \dfrac{c_i}{p^{i-k}} \equiv_1 
\sum\limits_{i=k+1}^\infty \dfrac{c_i}{p^{i-k}} = 
\sum\limits_{i=1}^\infty \dfrac{c_{k+i}}{p^i}.
\end{align}
Let $x \in Q_{\underline{a},p,1} \cap \cdots\cap Q_{\underline{a},p,p-1}$. If
 $k\neq a_n$ for any $n \in\mathbb{N}$, then by Lemma~\ref{lemma:Tp:J12p},
\begin{align}
k\in \mathbb{N}\backslash \underline a = J_{\underline{a},p,1} = 
\cdots=J_{\underline{a},p,p-1}.
\end{align}
Thus, for $y=\eta_k(x)$,  one has $my \in \mathbb{T}_+$ for
$m=1,\ldots,p-1$. 
%%%
%%% Removed by Dikranjan's suggestion December 31, 2008
%%%
%Therefore, by Theorem~\ref{thm:Jp:c1}(b),
%the coefficient of $\tfrac 1 p$ in (\ref{eq:Tp:etak}) is zero. 
Arguing as in the proof of Theorem~\ref{thm:Jp:Q12p}, we conclude that
\mbox{$c_{k+1}=0$} for every $k$ such that $k\neq a_n$ for all 
\mbox{$n\in\mathbb{N}$}. For $p=3$, this already completes the proof,
and so we may assume that $p\geq 5$.
If $k=a_n$ for some $n \in \mathbb{N}$,
then by Lemma~\ref{lemma:Tp:J12p},
\begin{align}
k\in \mathbb{N} = J_{\underline{a},p,1} = \cdots=
J_{\underline{a},p,{\lfloor \tfrac p 4\rfloor}}=
J_{\underline{a},p,p-1}.
\end{align}  
Arguing further as in the proof of Theorem~\ref{thm:Jp:Q12p} (but making 
use of the characters $\eta_k$ instead of $\zeta_k$), we conclude that
\mbox{$c_{k+1}=c_{a_n+1} \in \{-1,0,1\}$}.
%%%
%%% Removed by Dikranjan's suggestion December 31, 2008
%%%
% Consequently, for $y=\eta_k(x)$,  one has $my \in \mathbb{T}_+$ for
% $m=1,\ldots,\lfloor\tfrac p 4\rfloor$ and $(p-1)y\in\mathbb{T}_+$. 
%Therefore, by Corollary~\ref{cor:Jp:p-1c1}, the coefficient of 
%$\tfrac 1 p$ in (\ref{eq:Tp:etak}) belongs to $\{-1,0,1\}$. 
% So, \mbox{$c_{k+1}=c_{a_n+1} \in \{-1,0,1\}$}. 
Hence, $x$ has the form
\begin{align}
x= \sum\limits_{n=0}^\infty \dfrac{c_{a_n+1}}{p^{a_n+1}} = 
\sum\limits_{n=0}^\infty \varepsilon_n x_n,
\end{align}
where $\varepsilon_n = c_{a_n+1} \in \{-1,0,1\}$.
\end{proof}

Put $K_p =K_{\mathbb{N},p}$, that is, the special case of 
$K_{\underline{a},p}$, where $\underline a$ is the sequence 
$a_n=n$. First, we show that if $p \geq 5$, then $K_p$ is quasi-convex.

\begin{lemma} \label{lemma:Tp:polar}
Let $p \geq 5$ be a prime. Then
\mbox{$m_1 \eta_{k_1} + \cdots + m_l \eta_{k_l} \in K_p^\triangleright$}
\ for every
\mbox{$0\leq k_1 < \cdots < k_l$} and \mbox{$m_1,\ldots,m_l$} such that
\mbox{$|m_i| \leq \lfloor \tfrac p 4 \rfloor$} for all 
\mbox{$i \leq i \leq l$}.
\end{lemma}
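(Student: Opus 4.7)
The plan is to mirror the proof of Lemma~\ref{lemma:Jp:polar}, transferring the estimate from $\mathbb{J}_p$ to $\mathbb{T}$. The key pointwise identity is $\eta_k(x_n) = p^k \cdot p^{-(n+1)} = p^{k-n-1}$ in $\mathbb{T}$, which vanishes whenever $k \geq n+1$ (since $p^{k-n-1}$ is then a non-negative integer). Consequently, when evaluating $m_1\eta_{k_1} + \cdots + m_l\eta_{k_l}$ at any fixed $x_n \in K_p$, only the indices $k_i \leq n$ can contribute a non-zero term.

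I would fix $n \in \mathbb{N}$ and dispose of the trivial case where every $k_i > n$ (in which case the evaluation is $0 \in \mathbb{T}_+$). Otherwise, let $r$ be the largest index with $k_r \leq n$. Since $k_1 < k_2 < \cdots < k_r$ are distinct non-negative integers bounded above by $n$, one has $k_i \leq k_r - (r-i)$, and hence $n+1-k_i \geq (n+1-k_r) + (r-i)$. The real lift of $\eta_{k_i}(x_n)$ to $(0, 1/p]$ is $p^{-(n+1-k_i)}$, and these lifts decay geometrically as $i$ decreases. Using $|m_i| \leq \lfloor p/4\rfloor \leq (p-1)/4$ (which holds since $p$ is odd), the triangle inequality in $\mathbb{R}$ yields
\begin{align*}
\Bigl|\sum_{i=1}^{r} m_i \eta_{k_i}(x_n)\Bigr|
\;\leq\; \frac{p-1}{4} \cdot p^{k_r - n - 1}\sum_{j=0}^{\infty} p^{-j}
\;\leq\; \frac{p-1}{4}\cdot \frac{1}{p}\cdot\frac{p}{p-1}
\;=\; \frac{1}{4},
\end{align*}
where the factor $p^{k_r - n - 1} \leq 1/p$ comes from $k_r \leq n$. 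Since the real absolute value of the lifted sum is at most $1/4$, its image in $\mathbb{T}$ lies in $\mathbb{T}_+$, proving that $m_1\eta_{k_1} + \cdots + m_l\eta_{k_l} \in K_p^\triangleright$.

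Because the calculation is entirely parallel to its $\mathbb{J}_p$ counterpart, there is no substantive obstacle; the only delicate point is arranging the geometric series so that the prefactor $(p-1)/4$, the leading term $1/p$, and the sum $p/(p-1)$ combine to exactly $1/4$. The oddness of $p$ (automatic from $p \geq 5$) is needed for the bound $\lfloor p/4 \rfloor \leq (p-1)/4$, while the hypothesis $p \geq 5$ itself ensures $\lfloor p/4\rfloor \geq 1$ so that the lemma is not vacuous.
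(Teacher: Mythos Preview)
Your argument is correct and is essentially the paper's own proof: both reduce to the indices $k_i \leq n$, use $|m_i| \leq \lfloor p/4\rfloor \leq (p-1)/4$, and bound the resulting geometric sum by $1/4$. The only cosmetic difference is that the paper sums $p^{k_1}+\cdots+p^{k_l}\leq \tfrac{p^{k_l+1}-1}{p-1}$ to obtain a strict inequality, whereas you pass to the infinite geometric series and obtain $\leq 1/4$; since $\mathbb{T}_+$ is closed, either suffices.
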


\begin{proof}
Let $\tfrac 1 {p^{n+1}} \in K_p$. We may assume that \mbox{$k_l \leq n$}, 
because \mbox{$\eta_{k_i}(\tfrac 1 {p^{n+1}})=0$} for every 
\mbox{$i\in\mathbb{N}$} such that \mbox{$n<k_i$}. Since 
\mbox{$0\leq k_1 < \cdots < k_l$},
\begin{align}
p^{k_1} + \cdots +p^{k_l} \leq 1 + p + \cdots + p^{k_l} = 
\dfrac{p^{k_l+1}-1}{p-1}.
\end{align}
One has $|m_i| \leq \lfloor \tfrac p 4 \rfloor \leq \tfrac{p-1}{4}$ for 
each $i$, because $p$ is odd. Thus,
\begin{align}
|(m_1 \eta_{k_1} + \cdots + m_l \eta_{k_l})(\tfrac{1}{p^{n+1}})| & \leq
|m_1||\eta_{k_1}(\tfrac{1}{p^{n+1}})| + \cdots + 
|m_l||\eta_{k_l}(\tfrac{1}{p^{n+1}})| \\
& \leq \dfrac{p-1}4  
(p^{k_1} + \cdots +p^{k_l}) \dfrac{1}{p^{n+1}} \\
& \leq \dfrac{p-1}4  \cdot \dfrac{p^{k_l+1}-1}{p-1} \cdot
\dfrac{1}{p^{n+1}} = 
\dfrac{p^{k_l+1}-1}{4p^{n+1}} < \dfrac 1 4.
\end{align}
Therefore, $(m_1 \eta_{k_1} + \cdots + m_l \eta_{k_l})(\tfrac{1}{p^{n+1}})
\in \mathbb{T}_+$ for all $n \in \mathbb{N}$. Hence,
\mbox{$m_1 \eta_{k_1} + \cdots + m_l \eta_{k_l} \in K_p^\triangleright$}, 
as desired.
\end{proof}

\begin{proposition} \label{prop:Tp:Kp}
Let $p \geq 5$ be a prime. Then $K_p$ is quasi-convex.
\end{proposition}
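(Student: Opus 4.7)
The plan is to mirror the proof of Proposition~\ref{prop:Jp:Lp} line by line, exploiting the fact that all preparatory pieces on the $\mathbb{T}$-side have already been put in place: Theorem~\ref{thm:Tp:Q12p} is the analogue of Theorem~\ref{thm:Jp:Q12p}, and Lemma~\ref{lemma:Tp:polar} is the analogue of Lemma~\ref{lemma:Jp:polar}. The bridge between the two settings is the identity $p\,\eta_k=\eta_{k+1}$, which plays the role of $p\,\zeta_k=\zeta_{k-1}$ used on the $\mathbb{J}_p$ side, together with the fact that the coefficient of $1/p$ in $\eta_k(x)$ is exactly $c_{k+1}$ when $x=\sum c_i/p^i$.

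First I pick $x\in Q_{\mathbb{T}}(K_p)\setminus\{0\}$. Since $K_p=K_{\mathbb{N},p}$, Theorem~\ref{thm:Tp:Q12p} (applied with $\underline a = \mathbb{N}$) gives a representation $x=\sum_{n=0}^\infty \varepsilon_n\, p^{-(n+1)}$ with $\varepsilon_n\in\{-1,0,1\}$. Let $k_1$ denote the least index with $\varepsilon_{k_1}\neq 0$; after possibly replacing $x$ by $-x$ I may assume $\varepsilon_{k_1}=1$. Fix an arbitrary $k_2>k_1$, set $\rho=\varepsilon_{k_2}$, and my goal becomes $\rho=0$. Using (\ref{eq:Tp:etak}) I form
\begin{align}
y_{\pm}=(\eta_{k_1}\pm\eta_{k_2})(x)=\sum_{j=1}^\infty\frac{\varepsilon_{k_1+j-1}\pm\varepsilon_{k_2+j-1}}{p^{j}},
\end{align}
whose coefficients have absolute value at most $2\leq (p-1)/2$. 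By Lemma~\ref{lemma:Tp:polar}, $m(\eta_{k_1}\pm\eta_{k_2})\in K_p^\triangleright$ for every $1\leq m\leq \lfloor p/4\rfloor$, hence $m y_{\pm}\in\mathbb{T}_+$ throughout this range.

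Now I split into three overlapping cases exactly as in Proposition~\ref{prop:Jp:Lp}. If $p\neq 7$, Corollary~\ref{cor:Jp:c1} applied to $y_\pm$ yields $|1+\rho|\leq 1$ and $|1-\rho|\leq 1$, forcing $\rho=0$. If $k_1+1<k_2$, the identity $(p-1)\eta_k=\eta_{k+1}-\eta_k$ together with Lemma~\ref{lemma:Tp:polar} shows $(p-1)(\eta_{k_1}\pm\eta_{k_2})\in K_p^\triangleright$ (the four indices $k_1<k_1+1<k_2<k_2+1$ are then strictly increasing and each coefficient is $\pm 1$), so Corollary~\ref{cor:Jp:p-1c1} applies and again gives $\rho=0$.

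The only remaining case, and the sole obstacle, is $p=7$ with $k_2=k_1+1$, which must be treated by direct computation as in Proposition~\ref{prop:Jp:Lp}. Here $x\equiv 7^{-(k_1+1)}+\rho\cdot 7^{-(k_1+2)}\pmod 1$, and Lemma~\ref{lemma:Tp:polar} yields $\rho\eta_{k_1}+\eta_{k_1+1}\in K_7^\triangleright$ (both coefficients have absolute value at most $\lfloor 7/4\rfloor=1$). Evaluating at $x$ and reducing modulo $1$,
\begin{align}
(\rho\eta_{k_1}+\eta_{k_1+1})(x)\equiv \frac{2\rho}{7}+\frac{\rho^{2}}{49}\pmod 1,
\end{align}
which equals $15/49$ for $\rho=1$ and $-13/49$ for $\rho=-1$; both fall outside $\mathbb{T}_+=[-1/4,1/4]+\mathbb{Z}$, a contradiction. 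Hence $\rho=0$ in this case as well. This exhausts all $k_2>k_1$, so $x=\pm p^{-(k_1+1)}\in K_p$, completing the proof.
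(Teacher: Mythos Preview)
Your proof is correct and follows essentially the same approach as the paper's: both use Theorem~\ref{thm:Tp:Q12p} to reduce to $\varepsilon_n\in\{-1,0,1\}$, then form $y_\pm=(\eta_{k_1}\pm\eta_{k_2})(x)$, apply Lemma~\ref{lemma:Tp:polar} to get $my_\pm\in\mathbb{T}_+$ for $m\leq\lfloor p/4\rfloor$, and split into the same three overlapping cases (Corollary~\ref{cor:Jp:c1} for $p\neq 7$, Corollary~\ref{cor:Jp:p-1c1} via the identity $(p-1)\eta_k=\eta_{k+1}-\eta_k$ for $k_2>k_1+1$, and a direct computation for $p=7$, $k_2=k_1+1$). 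The arithmetic in the final case matches as well.
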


\begin{proof}
Let \mbox{$x \in Q_\mathbb{T}(K_p)\backslash\{0\}$}. Then
\mbox{$x \in  Q_{\mathbb{N},p,1} \cap \cdots\cap Q_{\mathbb{N},p,p-1}$}.
Consequently, by Theorem~\ref{thm:Tp:Q12p},
\mbox{$x=\sum\limits_{n=0}^\infty \tfrac{\varepsilon_n}{p^{n+1}}$}, where
\mbox{$\varepsilon_n \in \{-1,0,1\}$} for all 
\mbox{$n \in \mathbb{N}$}. Observe that for $k\in \mathbb{N}$,
\begin{align}
\eta_k(x) = p^k x = 
\sum\limits_{n=0}^\infty \dfrac{\varepsilon_n}{p^{n-k+1}} \equiv_1
\sum\limits_{n=k}^\infty \dfrac{\varepsilon_n}{p^{n-k+1}}
= \sum\limits_{i=1}^\infty  \frac{\varepsilon_{k+i-1}}{p^{i}}.
\end{align}
Let $k_1$ denote the smallest index such that  $\varepsilon_{k_1}\neq 0$, 
and let $k_2 \in \mathbb{N}$ be such that $k_1 < k_2$. In order to prove 
that $x \in K_p$, it remains to be seen that $\varepsilon_{k_2}=0$. 
Put 
\begin{align} \label{eq:Tp:y+}
y_+ &=(\eta_{k_1}+\eta_{k_2})(x) = 
\sum\limits_{i=1}^\infty  \frac{\varepsilon_{k_1+i-1}}{p^{i}}+
\sum\limits_{i=1}^\infty  \frac{\varepsilon_{k_2+i-1}}{p^{i}} = 
\sum\limits_{i=1}^\infty  
\frac{\varepsilon_{k_1+i-1}+\varepsilon_{k_2+i-1}}{p^{i}}, \text{ and}\\
\label{eq:Tp:y-}
y_- &=(\eta_{k_1} - \eta_{k_2})(x) = 
\sum\limits_{i=1}^\infty  \frac{\varepsilon_{k_1+i-1}}{p^{i}} -
\sum\limits_{i=1}^\infty  \frac{\varepsilon_{k_2+i-1}}{p^{i}} = 
\sum\limits_{i=1}^\infty  
\frac{\varepsilon_{k_1+i-1} - \varepsilon_{k_2+i-1}}{p^{i}}.
\end{align}
By Lemma~\ref{lemma:Tp:polar},
$m\eta_{k_1}+m\eta_{k_2}, m\eta_{k_1}-m\eta_{k_2} 
\in K_p^\triangleright$ 
for $m=1,\ldots,\lfloor \tfrac p 4 \rfloor$. Thus, for 
$m=1,\ldots,\lfloor \tfrac p 4 \rfloor$,
\begin{align}
\label{eq:Tp:my+}
my_+ & = m (\eta_{k_1}+\eta_{k_2})(x) \in \mathbb{T}_+, \text{ and}\\
\label{eq:Tp:my-}
my_- & = m (\eta_{k_1}-\eta_{k_2})(x) \in \mathbb{T}_+.
\end{align}
One has 
$|\varepsilon_{k_1+i-1} + \varepsilon_{k_2+i-1}| \leq 2 \leq \tfrac{p-1}2$
and 
$|\varepsilon_{k_1+i-1} - \varepsilon_{k_2+i-1}| \leq 2 \leq \tfrac{p-1}2$,
for all $i \in \mathbb{N}$, because  
\mbox{$|\varepsilon_n| \leq 1$} for all 
\mbox{$n \in \mathbb{N}$}.  By replacing $x$ with $-x$ if necessary, we 
may assume that 
\mbox{$\varepsilon_{k_1}\hspace{-2pt}= 1$}. Put 
\mbox{$\rho=\varepsilon_{k_2}$}.
We may apply Corollaries~\ref{cor:Jp:c1} and~\ref{cor:Jp:p-1c1}, but we 
must distinguish between three (overlapping) cases:

1. If $p\neq 7$, then by Corollary~\ref{cor:Jp:c1}, the coefficients
of $\tfrac 1 p$ in (\ref{eq:Tp:y+}) and (\ref{eq:Tp:y-}) are $-1$, $0$, or 
$1$. In other words, $|1 + \rho| \leq 1$ and $|1 - \rho| \leq 1$. Since
$\rho \in \{-1,0,1\}$, this implies that $\rho=0$, as required.

2. If $k_1+1 < k_2$, then by Lemma~\ref{lemma:Tp:polar},
\begin{align}
(p-1)(\eta_{k_1}+\eta_{k_2}) & = 
-\eta_{k_1}+\eta_{k_1+1}  -\eta_{k_2}+\eta_{k_2+1} \in K_p^\triangleright,
\text{ and} \\
(p-1)(\eta_{k_1}-\eta_{k_2}) & =
-\eta_{k_1}+\eta_{k_1+1} +\eta_{k_2}-\eta_{k_2+1} \in K_p^\triangleright.
\end{align}
Thus, in addition to (\ref{eq:Tp:my+}) and (\ref{eq:Tp:my-}), one also 
has
\begin{align}
(p-1)y_+ & = (p-1) (\eta_{k_1}+\eta_{k_2})(x) \in \mathbb{T}_+, \text{ and}\\
(p-1)y_- & = (p-1) (\eta_{k_1}-\eta_{k_2})(x) \in \mathbb{T}_+.
\end{align}
Therefore, by Corollary~\ref{cor:Jp:p-1c1}, the coefficients of 
$\tfrac 1 p$ in (\ref{eq:Tp:y+}) and (\ref{eq:Tp:y-}) are $-1$, $0$, or $1$. 
In other words, $|1 + \rho| \leq 1$ and $|1 - \rho| \leq 1$. Since
$\rho \in \{-1,0,1\}$, this implies that $\rho=0$, as required.

3. If $p=7$ and $k_2=k_1+1$, then by what we have shown so far, 
\begin{align}
x = \frac{1}{7^{k_1+1}} +\frac{\rho}{7^{k_1+2}} =
\dfrac{7+\rho}{7^{k_1+2}},
\end{align}
where $\rho \in \{-1,0,1\}$. By 
Lemma~\ref{lemma:Tp:polar}, 
\begin{align}
(7+\rho)\eta_{k_1} = \rho \eta_{k_1} + \eta_{k_1+1} \in 
K_7^\triangleright.
\end{align}
Therefore, 
\begin{align}
\dfrac{2\rho}{7} + \dfrac{\rho^2}{49} =
\dfrac{14\rho +\rho^2}{49} \equiv_1
\dfrac{(7+\rho)^2}{49} =
(7+\rho)\eta_{k_1}(x)  \in \mathbb{T}_+.
\end{align}
Hence, $\rho=0$, as required. 
\end{proof}

\begin{proof}[Proof of Theorem~\ref{thm:Tp:main}.]
By Proposition~\ref{prop:Tp:Kp}, $K_p$ is quasi-convex.
Thus,  $K_{\underline{a},p} \subseteq K_p$ implies that
$Q_\mathbb{T}(K_{\underline{a},p}) \subseteq K_p$. On the other hand,
by Theorem~\ref{thm:Tp:Q12p},
\begin{align}
Q_\mathbb{T}(K_{\underline{a},p}) \subseteq
Q_{\underline{a},p,1} \cap \cdots\cap Q_{\underline{a},p,p-1}\subseteq  \{
\sum\limits_{n=0}^\infty \varepsilon_n x_n \mid
(\forall n \in \mathbb{N}) (\varepsilon_n \in \{-1,0,1\})\}.
\end{align}
Therefore,
\begin{align}
Q_\mathbb{T}(K_{\underline{a},p}) \subseteq
K_p \cap \{
\sum\limits_{n=0}^\infty \varepsilon_n x_n \mid
(\forall n \in \mathbb{N}) (\varepsilon_n \in \{-1,0,1\})\} = 
K_{\underline{a},p},
\end{align}
as desired.
\end{proof}

\begin{example}\label{ex:Tp:p23}
If $p=2$ or $p=3$, then $Q_{\mathbb{T}}(K_p)=\mathbb{T}$, that is,
$K_p$ is $qc$-dense in $\mathbb{T}$ (cf.~\cite[4.4]{DikLeo}). In
particular, $K_2$ and $K_3$ are not quasi-convex in $\mathbb{T}$. Thus,
Proposition~\ref{prop:Tp:Kp} fails if $p \not\geq 5$. Therefore,
Theorem~\ref{thm:Tp:main} does not hold for $p \not\geq 5$.
\end{example}

\section*{Acknowledgments}

We are grateful to Karen Kipper for her kind help in proof-reading this
paper for grammar and punctuation.

\nocite{Aussenhofer}   %% Dikranjan:Diss
\nocite{Banasz}        %% Dikranjan:BTVS
\nocite{BeiLeoDikSte}  %% Dikranjan:BDDS
%\nocite{BrugMar2}      %% Dikranjan:BruMar (Bruguera, Peinador)
%\nocite{BruPhD}        %% Dikranjan:Bru (Brugera PhD)
\nocite{deLeoPhD}      %% Dikranjan:dL2
\nocite{DikLeo}        %% Dikranjan:DdL
\nocite{Fuchs}
%\nocite{Hern2}         %% Dikranjan:Her (S. Hernandez)
\nocite{Pontr}         %% Dikranjan:P
\nocite{Vilenkin}      %% Dikranjan:Vil
\nocite{GLdualtheo}

{\footnotesize

\bibliography{notes,notes2,notes3}
}

\begin{samepage}

\bigskip
\noindent
\begin{tabular}{l @{\hspace{1.8cm}} l}
Department of Mathematics and Computer Science & Department of Mathematics\\
University of Udine & University of Manitoba\\
Via delle Scienze, 208 -- Loc. Rizzi, 33100 Udine
 & Winnipeg, Manitoba, R3T 2N2 \\
Italy & Canada \\ & \\
\em e-mail: dikranja@dimi.uniud.it  &
\em e-mail: lukacs@cc.umanitoba.ca
\end{tabular}

\end{samepage}

\end{document}